\newtheorem{theorem}{Theorem}[section]
\newtheorem{lemma}[theorem]{Lemma}
\theoremstyle{definition}
\newtheorem{definition}[theorem]{Definition}
\newtheorem{remark}{Remark}
\def \epsilon {\varepsilon}
\def \phi {\varphi}
\def \J {{\mathcal J}}
\def \Jm {{\mathcal J}_m}
\def \Jn {{\mathcal J}_n}
\def \F {{\mathcal F}}
\def \Fm {{\mathcal F}_m}
\def \Fn {{\mathcal F}_n}
\def \S {{\mathcal S}}
\def \C {\mathbb C}
\def \chat {{\widehat{\mathbb C}}}
\def \R {\mathbb R}
\def \N {\mathbb N}
\def \nochat {{\mathbb N}_0 \times {\widehat {\mathbb C}}}
\def \Pm {\{P_m \}_{m=1}^\infty}
\def \HD {{\mathrm{HD}}}
\title[HNUP Non-Autonomous Julia Sets]{Hereditarily non Uniformly Perfect non-Autonomous Julia Sets}
\author{Mark Comerford, Rich Stankewitz, Hiroki Sumi}
\keywords{Hereditarily non Uniformly Perfect Sets, Non-Autonomous Iteration}
\subjclass{Primary 30D05, Secondary 28A80}
\email{mcomerford@math.uri.edu}
\email{rstankewitz@bsu.edu}
\email{sumi@math.h.kyoto-u.ac.jp}
\begin{document}

\maketitle
\centerline{\scshape Mark Comerford}
\medskip
{\footnotesize
 \centerline{Department of Mathematics}
   \centerline{University of Rhode Island}
   \centerline{5 Lippitt Road, Room 102F}
   \centerline{Kingston, RI 02881, USA}
} 

\medskip

\centerline{\scshape Rich Stankewitz }
\medskip
{\footnotesize
 \centerline{Department of Mathematical Sciences}
   \centerline{Ball State University}
   \centerline{Muncie, IN 47306, USA}
} 

\medskip

\centerline{\scshape Hiroki Sumi }
\medskip
{\footnotesize
 \centerline{Course of Mathematical Science}
 \centerline{Department of Human Coexistence}
 \centerline{Graduate School of Human and Environmental Studies}
   \centerline{Kyoto University}
   \centerline{Yoshida-nihonmatsu-cho, Sakyo-ku}
   \centerline{Kyoto 606-8501, Japan}
} 

\bigskip

\begin{abstract}Hereditarily non uniformly perfect (HNUP) sets were introduced by Stankewitz, Sugawa, and Sumi in \cite{SSS} who gave several examples of such sets based on Cantor set-like constructions using nested intervals. We exhibit a class of examples in non-autonomous iteration where one considers compositions of polynomials from a sequence which is in general allowed to vary. In particular, we give a sharp criterion for when Julia sets from our class will be HNUP and we show that the maximum possible Hausdorff dimension of $1$ for these Julia sets can be attained. The proof of the latter considers the Julia set as the limit set of a non-autonomous conformal iterated function system and we calculate the Hausdorff dimension using a version of Bowen's formula given in the paper by Rempe-Gillen and Urb\'{a}nski \cite{RU}.
\end{abstract}

\section{Introduction}

Our paper is concerned with non-autonomous iteration of complex polynomials. This subject was started by Fornaess and Sibony \cite{FS} in 1991 and by Sester, Sumi and others who were working in the closely related area of skew-products \cite{Ses, Sumi1, Sumi2, Sumi3, Sumi4}. There is also an extensive literature in the real variables case which is mainly focused on topological dynamics, chaos, and difference equations, e.g. \cite{Balibrea, CL}.  A key idea in our work is linking non-autonomous iteration to iterated function systems, most particularly Moran-set constructions. A good exposition on the classical version of this can be found in \cite{Wen}, but the non-autonomous version we make use of is described in the paper of Rempe-Gillen and Urb\'anski \cite{RU}.

We begin with the basic definitions we need in order to state the main theorems of this paper. In the following sections, we then prove these theorems, together with some supporting results and make a few concluding remarks.

\subsection{Polynomial Sequences}

Let $\Pm$ be a sequence of polynomials where each $P_m$ has degree $d_m \ge 2$. For each $0 \le m$, let $Q_m$ be the composition $P_m \circ \cdots \cdots \circ P_2 \circ P_1$ (where for convenience we set $Q_0 = Id$)
and, for each $0 \le m \le n$, let $Q_{m,n}$ be the composition $P_n \circ \cdots \cdots \circ P_{m+2} \circ P_{m+1}$ (where we let each $Q_{m,m}$ be the identity). Such a sequence can be thought of in terms of the sequence of iterates of a skew product on $\chat$ over the non-negative integers $\N_0$ or, equivalently, in terms of the sequence of iterates of a mapping $F$ of the set $\nochat$ to itself, given by $F(m, z) := (m+1, P_{m+1}(z))$.
Let the degrees of these compositions $Q_m$ and $Q_{m, n}$ be $D_m$ and $D_{m,n}$ respectively so that $D_m = \prod_{i=1}^m d_i$, $D_{m,n} = \prod_{i=m+1}^n d_i$.

For each $m \ge 0$ define the \emph{$m$th iterated Fatou set} $\Fm$ by
\[ \Fm = \{z \in \chat : \{Q_{m,n}\}_{n=m}^\infty \;
\mbox{is normal on some neighbourhood of}\; z \}\]
where we take our neighbourhoods with respect to the spherical topology on $\chat$. We then define the \emph{$m$th iterated Julia set} $\Jm$ to be the complement $\chat \setminus \Fm$. At time $m =0$ we call the corresponding iterated Fatou and Julia sets simply the \emph{Fatou} and \emph{Julia sets} for our sequence and designate them by $\F$ and $\J$ respectively.

One can easily show that the iterated Fatou and Julia sets are completely invariant in the following sense.

\begin{theorem}\label{ThmCompInvar}
For each $0 \le m \le n$, $Q_{m,n}(\Fm) = \Fn$ and $Q_{m,n}(\Jm) = \Jn$ with components of $\Fm$ being mapped surjectively onto components of $\Fn$.
\end{theorem}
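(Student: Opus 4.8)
The plan is to prove the two containments $Q_{m,n}^{-1}(\Fn)\subseteq\Fm$ and $Q_{m,n}(\Fm)\subseteq\Fn$ separately, exactly as one does for a single rational map, and then to read off the stated equalities using that $Q_{m,n}$ is a non-constant rational self-map of $\chat$ and hence open, proper, finite-to-one and surjective. Two elementary facts will be used repeatedly: the cocycle relation $Q_{m,k}=Q_{n,k}\circ Q_{m,n}$ for $k\ge n\ge m$, and the remark that, normality being a local property, $\{Q_{m,k}\}_{k\ge m}$ is in fact normal on the whole open set $\Fm$; equivalently, by Arzel\`a--Ascoli and the compactness of $\chat$, the set $\Fm$ is precisely the set of points at which $\{Q_{m,k}\}_{k\ge m}$ is equicontinuous with respect to the spherical metric $d(\cdot,\cdot)$.

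For the inclusion $Q_{m,n}^{-1}(\Fn)\subseteq\Fm$ (equivalently $Q_{m,n}(\Jm)\subseteq\Jn$): if $Q_{m,n}(z)\in\Fn$, pick a neighbourhood $W$ of $Q_{m,n}(z)$ on which $\{Q_{n,k}\}_{k\ge n}$ is normal and put $U=Q_{m,n}^{-1}(W)$. Since $Q_{m,n}(U)\subseteq W$, precomposition with the fixed holomorphic map $Q_{m,n}$ shows that $\{Q_{n,k}\circ Q_{m,n}\}_{k\ge n}=\{Q_{m,k}\}_{k\ge n}$ is normal on $U$; adjoining the finitely many maps $Q_{m,m},\dots,Q_{m,n-1}$ preserves normality, so $z\in\Fm$.

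The reverse inclusion $Q_{m,n}(\Fm)\subseteq\Fn$ is the crux, since $Q_{m,n}$ is not locally invertible at its critical points and one cannot simply push a normal family forward; this is where the equicontinuity picture together with the open mapping theorem does the work. I would argue as follows. Fix $w=Q_{m,n}(z)$ with $z\in\Fm$ and let $\epsilon>0$. By equicontinuity of $\{Q_{m,k}\}_{k\ge m}$ at $z$ there is $\delta>0$ with $d(z',z)<\delta\Rightarrow d(Q_{m,k}(z'),Q_{m,k}(z))<\epsilon$ for all $k\ge m$. As $Q_{m,n}$ is an open map, $Q_{m,n}(B(z,\delta))$ contains a ball $B(w,\rho)$ with $0<\rho\le\epsilon$; for any $w'$ with $d(w',w)<\rho$ write $w'=Q_{m,n}(z')$ with $d(z',z)<\delta$, so that for every $k\ge n$
\[
d\bigl(Q_{n,k}(w'),Q_{n,k}(w)\bigr)=d\bigl(Q_{m,k}(z'),Q_{m,k}(z)\bigr)<\epsilon.
\]
Hence $\{Q_{n,k}\}_{k\ge n}$ is equicontinuous at every point of the open set $Q_{m,n}(\Fm)$, therefore normal there, so $Q_{m,n}(\Fm)\subseteq\Fn$. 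Combining the two inclusions gives $Q_{m,n}^{-1}(\Fn)=\Fm$ and $Q_{m,n}^{-1}(\Jn)=\Jm$, and applying $Q_{m,n}$ (which is surjective) yields $Q_{m,n}(\Fm)=\Fn$ and $Q_{m,n}(\Jm)=\Jn$.

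It remains to address the behaviour on components. Let $W$ be a component of $\Fm$. Then $Q_{m,n}(W)$ is connected and, by the above, lies in $\Fn$, hence inside a single component $W'$ of $\Fn$; it is open in $W'$ because $Q_{m,n}$ is open. It is also closed in $W'$: if $z_j\in W$ with $Q_{m,n}(z_j)\to w_0\in W'$, pass to a subsequence with $z_j\to z_0\in\chat$; then $Q_{m,n}(z_0)=w_0\in\Fn$, so $z_0\in Q_{m,n}^{-1}(\Fn)=\Fm$, and since $W$ is open and closed in $\Fm$ while $z_0\in\overline W$, we get $z_0\in W$ and $w_0\in Q_{m,n}(W)$. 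As $W'$ is connected this forces $Q_{m,n}(W)=W'$, and every component of $\Fn$ arises this way because $Q_{m,n}$ is surjective with $Q_{m,n}^{-1}(\Fn)=\Fm$. The only real obstacle is the forward invariance $Q_{m,n}(\Fm)\subseteq\Fn$ at critical points, which the Arzel\`a--Ascoli/open-mapping argument is tailored to handle.
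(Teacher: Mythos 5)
The paper offers no proof of Theorem~\ref{ThmCompInvar} --- it is prefaced only with ``one can easily show'' --- so there is no argument of the authors' to compare against. Your proof is correct and is a standard route. The backward inclusion $Q_{m,n}^{-1}(\Fn)\subseteq\Fm$ by precomposition with the fixed continuous map $Q_{m,n}$ is routine. The only direction requiring real work, $Q_{m,n}(\Fm)\subseteq\Fn$, you handle cleanly via the spherical Arzel\`a--Ascoli reformulation of normality as equicontinuity combined with the openness of $Q_{m,n}$; this neatly sidesteps the usual delicacy at critical points (where one cannot locally invert $Q_{m,n}$ and push a normal family forward directly). The component argument --- $Q_{m,n}(W)$ is nonempty, open (open mapping), and closed (compactness of $\chat$ to extract $z_0$, then $Q_{m,n}^{-1}(\Fn)=\Fm$ to place $z_0$ back in $W$) inside the connected set $W'$ --- is also sound, as is the observation that surjectivity of $Q_{m,n}$ together with $Q_{m,n}^{-1}(\Fn)=\Fm$ shows every component of $\Fn$ is attained.

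One minor remark on precision rather than correctness: you assert that $\Fm$ is \emph{precisely} the set of points of equicontinuity of $\{Q_{m,k}\}_{k\ge m}$. The forward inclusion is immediate from normality; the reverse (equicontinuity at a single point forces normality in a neighbourhood) is also true for meromorphic families --- equicontinuity at $z$ sends a small ball $B(z,\delta)$ into a small spherical disk omitting more than two points, so Montel applies --- but your argument never actually invokes this reverse direction: you only need equicontinuity at \emph{every} point of the open set $Q_{m,n}(\Fm)$, from which local uniform equicontinuity and hence normality follow by the easy covering argument. So the word ``precisely'' is correct but slightly more than you use.
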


An important special case is when we have an integer $d \ge 2$ and real numbers $M \ge 0$, $K \ge 1$ for which our sequence $\Pm$ is such that
\[P_m(z) = a_{d_m,m}z^{d_m} + a_{d_m-1,m}z^{d_m-1} + \cdots \cdots +
a_{1,m}z + a_{0,m}\]
is a polynomial of degree $2 \le d_m \le d$ whose coefficients satisfy
\[\frac{1}{K} \le |a_{d_m,m}| \le K,\quad m \ge 1, \:\:\quad \quad |a_{k,m}| \le M,\quad m \ge 1,\:\:\: 0 \le k \le d_m -1. \]
Such sequences are called \emph{bounded sequences of polynomials} or simply \emph{bounded sequences} (see e.g. \cite{Com1, Com2}), this definition being a slight generalization of that originally made by Fornaess and Sibony in \cite{FS} who considered bounded sequences of monic polynomials.

In what follows, for $z \in \C$ and $r > 0$, we use the notation ${\mathrm D}(z, r)$ for the open disc with centre $z$ and radius $r$, while the corresponding closed disc and boundary circle will be denoted by
$\overline{\mathrm D}(z,r)$ and ${\mathrm C}(z, r)$ respectively. For $z \in \C$ and $0 < r< R$, we use ${\mathrm A}(z, r,R)$ for the round annulus $\{w:r < |w-z| < R\}$ with centre $z$, inner radius $r$, and outer radius $R$, while we use $\overline {\mathrm A}(z, r,R)$
for the corresponding closed annulus. 

\subsection{Hereditarily non Uniformly Perfect Sets}\label{SectHNUP}

We call a doubly connected domain $A$ in $\C$ that can be conformally mapped onto a true (round) annulus $\mathrm{A}(z,r,R)$, for some $0<r<R$, a \emph{conformal annulus} with the \emph{modulus} of $A$ given by $\textrm{mod }A=\log(R/r)$, noting that $R/r$ is uniquely determined by $A$ (see, e.g., the version of the Riemann mapping theorem for multiply connected domains in \cite{Ahl}).

\begin{definition} \label{sepann}
A conformal annulus $A$ is said to
\emph{separate} a set $F \subset \C$ if $F \cap A = \emptyset$ and $F$ intersects both components of $\C \setminus
A$.
\end{definition}

\begin{definition} \label{updef}
A compact subset $F \subset \C$ with two or more points is \emph{uniformly perfect} if
there exists a uniform upper bound on the moduli of all conformal annuli which separate $F$.
\end{definition}
The concept of hereditarily non uniformly perfect was introduced in
\cite{SSS} and can be thought of as a thinness criterion for sets which is a strong version of failing to be uniformly perfect.
\begin{definition}
A compact set $E$ is called \textit{hereditarily non uniformly perfect} (HNUP) if no subset of $E$ is uniformly perfect.
\end{definition}

In our case we will show that the iterated Julia sets for suitably chosen polynomial sequences are HNUP by showing they satisfy the stronger property of \emph{pointwise thinness}. A set $E \subset \C$ is called \emph{pointwise thin}
when for each $z \in E$ there exist $0<r_n<R_n$ with $R_n/r_n \to +\infty$ and $R_n \to 0$ such that each true annulus $\mathrm{A}(z ,r_n, R_n)$ separates $E$. A conformal annulus of large modulus which separates a set $E$ contains a round annulus of large modulus (see, e.g., Theorem 2.1 of~\cite{McMullen1}) which then also separates $E$.  We thus have an equivalent formulation (that we shall use later), namely that $E$ is pointwise thin if, for each $z \in E$, there exists a sequence of conformal annuli $A_n$ each of which separates $E$, has $z$ in the bounded component of its complement, and such that $\textrm{mod }A_n \to +\infty$ while the Euclidean diameter of $A_n$ tends to zero.

Note that any pointwise thin compact set is HNUP. Stankewitz, Sugawa, and Sumi used pointwise thinness to establish the HNUP property for several examples in their paper \cite{SSS}. However, they also pointed out that this property is stronger than HNUP and gave an example, originally due to Curt McMullen in~\cite{McMullen2}, of a set of positive $2$-dimensional Lebesgue measure which is HNUP but not pointwise thin.

\subsection{Statements of the Main Results}

The construction of the sequences of polynomials we consider in this paper begins with a sequence $\{c_k\}_{k=1}^\infty$ in $\C^\N$ where we require that $|c_k| > 4$ for every $k$. Using this, we define a sequence $\{m_k\}_{k=1}^\infty$ of natural numbers for which we have, for each $k \ge 1$,

\begin{equation}\label{Invariance*} 2^{2^{m_k}} \ge 2 \sqrt{|c_k|}.
\end{equation}

Since $|c_k| > 4$ for every $k$, we clearly then have the following weaker condition which will suffice for most of our results
\begin{equation}\label{Invariance1}2^{2^{m_k}} > \sqrt{|c_k|} + 1.
\end{equation}

 \newpage

\afterpage{\clearpage}

\begin{figure}[htbp]
  \centering
  \vspace{0.2cm}
\includegraphics[scale=0.75]{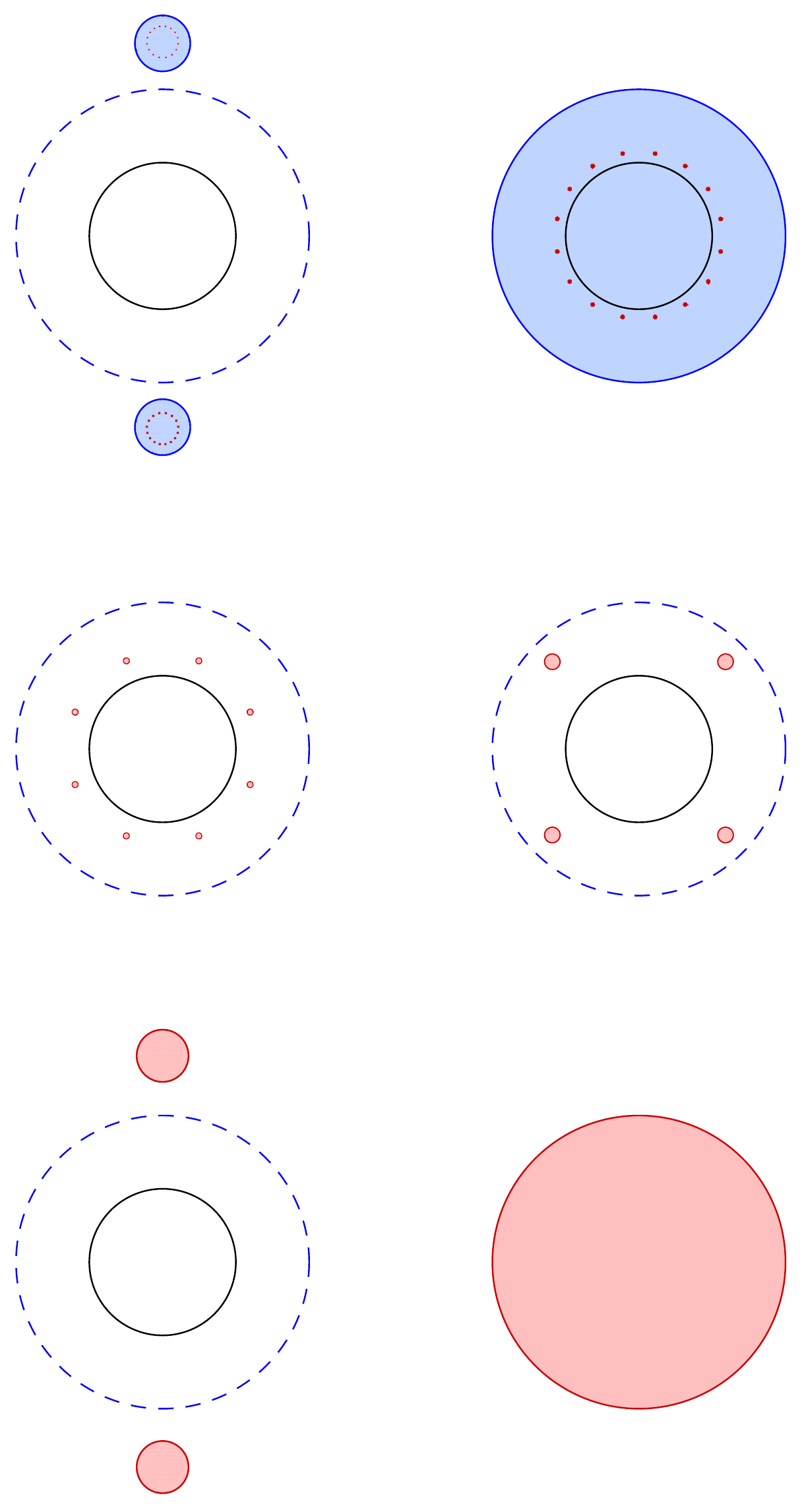}

  \vspace{1cm}
  \caption{How the survival sets ${\mathcal S}_k$ are nested. The pictures show preimages of $\overline {\mathrm D}(0,2)$ at stages $M_k$ (in red) and $M_{k-1}$ (in blue) with $m_k = 3$. The dashed blue circle is ${\mathrm C}(0,2)$ while the unit circle is shown in black.
      Observe how $Q_{M_{k-1},M_k}^{-1}(\overline {\mathrm D}(0,2)) \subset \overline {\mathrm D}(0, 2) \setminus \overline {\mathrm D}(0, 1)$ as in Remark~\ref{Remark1}(\ref{Remark1c}) is shown in red in Stage $M_{k-1}$.}\label{Preimages}
    \unitlength1cm
\begin{picture}(0.01,0.01)
  \put(2,4.4){\footnotesize Stage $M_k$  }
    \put(-5.1,4.4){\footnotesize Stage $M_{k-1}+3 = M_{k-1}+m_k$  }
    \put(1.5,11.3){\footnotesize Stage $M_{k-1}+2$}
    \put(-4.0,11.3){\footnotesize Stage $M_{k-1}+1$}
    \put(1.9,16.4){\footnotesize Stage $M_{k-1}$  }
    \put(-5.2,16.4){\footnotesize Stage $M_{k-1}-1 = M_{k-2}+m_{k-1}$  }
    \put(-.7,7.65){\vector(1, 0){1.4}}
    \put(-.32,7.9){\footnotesize $P_{M_{k}}$}Ä
        \put(-.65,13.5){\vector(1, 0){1.3}}
    \put(-.5,13.7){\footnotesize $z \mapsto z^2$}
        \put(-.7,19.6){\vector(1, 0){1.3}}
    \put(-.45,19.9){\footnotesize $P_{M_{k-1}}$}
            \put(1.3,17.7){\vector(-1, -1){2.6}}
                        \put(1.3,11.85){\vector(-1, -1){2.6}}
 \end{picture}
\end{figure}

\newpage

Now set $M_0 = 0$, $M_k = \sum_{j=1}^k {(m_j + 1)}$ for each $k \ge 1$, and define a sequence of quadratic polynomials $\Pm$ by
\vspace{0.2cm}
\[P_m = \left \{ \begin{array}{r@{\: ,\quad}l}
z^2 + c_k & \mbox{if} \:\; m=M_k \quad \mbox{for some} \:\;k \,\ge 1
\vspace{.2cm}\\

z^2 &  \mbox{otherwise.} \\
\end{array} \right .  \]

Hence each $Q_{M_{k-1},M_{k-1}+m_k}(z) = z^{2^{m_k}}$, $Q_{M_{k-1},M_{k}}(z) = z^{2^{m_k+1}}+c_k$, $Q_{M_k}= Q_{M_{k-1},M_{k}}\circ \dots \circ Q_{M_1,M_{2}}\circ Q_{M_0, M_{1}}$ has degree $2^{M_k}$, and we have the following three observations.

\begin{remark} \label{Remark1}

\begin{enumerate}[(a)]
\item\label{Remark1a} Note that~\eqref{Invariance*} ensures that the image of the closed disc $\overline {\mathrm D}(0, 2)$ under $m_k$ iterations of $z^2$, i.e., $Q_{M_{k-1},M_{k-1}+m_k}$, will cover the disc $\overline {\mathrm D}(0,2\sqrt{|c_k|}) \supset \overline {\mathrm D}(0,\sqrt{|c_k|}+ 1)$.  However, for all but the proof of Theorem~\ref{ThmDichotomy}, we only require the consequence of the weaker inequality~\eqref{Invariance1} that gives that the image of the closed disc $\overline {\mathrm D}(0, 2)$ under  $Q_{M_{k-1},M_{k-1}+m_k}$ will cover the disc $\overline {\mathrm D}(0,\sqrt{|c_k|}+ 1)  \supset \overline {\mathrm D}(\pm \sqrt{-c_k}, 1)$.

    Since $|c_k|>4$, we see that $\overline {\mathrm D}(\sqrt{-c_k}, 1)$ lies outside $\overline {\mathrm D}(0, 1)$, and so it follows that $Q_{M_{k-1},M_{k-1}+m_k}^{-1}(\overline {\mathrm D}(\sqrt{-c_k}, 1))$ consists of $2^{m_k}$ components, each of which is contained in $\overline {\mathrm D}(0, 2) \setminus \overline {\mathrm D}(0, 1)$.  Similarly, by also considering $Q_{M_{k-1},M_{k-1}+m_k}^{-1}(\overline {\mathrm D}(-\sqrt{-c_k}, 1))$, we note that one can quickly conclude that the preimage under $Q_{M_{k-1},M_{k-1}+m_k}$ of $\overline {\mathrm D}(\sqrt{-c_k}, 1) \cup \overline {\mathrm D}(-\sqrt{-c_k}, 1)$ consists of $2^{m_k+1}$ components, each differing from another by a rotation (about $0$) by a multiple of $\tfrac{2\pi}{2^{m_k + 1}}.$

\item \label{Remark1b} We also note that the preimage of $\overline {\mathrm D}(0, 2)$ under $P_{M_{k}}(z) = z^2 + c_{k}$ with $|c_{k}| > 4$ consists of two components about $\pm \sqrt{-c_{k}}$  which are contained in the two discs $\overline {\mathrm D}(\sqrt{-c_{k}}, 1)$, $\overline {\mathrm D}(-\sqrt{-c_{k}}, 1)$.  This is quickly seen by noting that the derivative of the inverse branches of $P_{M_{k}}$ have modulus less than $1/2$ on $\overline {\mathrm D}(0, 2)$.

\item \label{Remark1c}  Since $|c_k|>4$, the map $Q_{M_{k-1},M_k}$ has a full set of $2^{m_k+1}$ inverse branches on ${\mathrm D}(0,4)$.  Hence, by parts~(\ref{Remark1a}) and~(\ref{Remark1b}), we see that the set $Q_{M_{k-1},M_k}^{-1}(\overline {\mathrm D}(0,2))=Q_{M_{k-1},M_{k-1}+m_k}^{-1}(P_{M_{k}}^{-1}(\overline {\mathrm D}(0,2)))$ is shown to consist of $2^{m_k+1}$ components, one for each branch, each of which is contained in $\overline {\mathrm D}(0, 2) \setminus \overline {\mathrm D}(0, 1)$.  (See Figure~\ref{Preimages}.)
\end{enumerate}
\end{remark}

Given such a sequence, for each $k \ge 1$, we define the \emph{$k$th survival set ${\mathcal S}_k$ at time $0$} by
\begin{equation}\label{Sk}
{\mathcal S}_k = Q_{M_k}^{-1}(\overline {\mathrm D}(0, 2)) = Q_{M_0, M_{1}}^{-1} ( \cdots \cdots (Q_{M_{k-1},M_{k}}^{-1}(\overline {\mathrm D}(0, 2)))\cdots).
\end{equation}

 \newpage

\begin{remark} \label{Invariance2} Note that the sets ${\mathcal S}_k \subset \overline {\mathrm D}(0, 2) \setminus \overline {\mathrm D}(0, 1)$ are decreasing in $k$ by Remark~\ref{Remark1}(\ref{Remark1c}).  Also, $Q_{M_k}({\mathcal S}_{k+1}) = Q_{M_{k},M_{k+1}}^{-1}(\overline {\mathrm D}(0, 2))\subset \overline {\mathrm D}(0, 2) \setminus \overline {\mathrm D}(0, 1)$.  Lastly, by repeatedly applying Remark~\ref{Remark1}(\ref{Remark1c}), ${\mathcal S}_k$ consists of $2^{M_k}$ components.
\end{remark}
    
Given this, our first theorem is as follows:

\begin{theorem}\label{ThmJm}
For a sequence $\Pm$ as above, we have $\mathcal{J}=\bigcap_{k \ge 1} {\mathcal S}_k$.  Consequently, for each $m \ge 0$,

\[{\mathcal J_m} = Q_m \left( \bigcap_{k \ge 1} {\mathcal S}_k \right ) =  \bigcap_{k \ge 1} Q_m({\mathcal S}_k).\]
\end{theorem}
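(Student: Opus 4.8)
The plan is to show both inclusions $\J \subset \bigcap_k \S_k$ and $\bigcap_k \S_k \subset \J$, and then deduce the formula for $\Jm$ from complete invariance (Theorem~\ref{ThmCompInvar}). For the first inclusion, I would argue contrapositively: if $z \notin \S_k$ for some $k$, then $Q_{M_k}(z) \notin \overline{\mathrm D}(0,2)$, i.e. $|Q_{M_k}(z)| > 2$. The key observation is that once an orbit point has modulus exceeding $2$, all the remaining maps in the sequence (both $z \mapsto z^2$ and $z \mapsto z^2 + c_j$ with the relevant size control, using $|z|>2$ and the structure of the construction) drive the orbit to infinity locally uniformly; hence $z \in \F$. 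More precisely, since $|c_j|>4$ is not directly an escape bound, one must be slightly careful, but the point is that the survival sets $\S_k$ were \emph{designed} as the preimages of $\overline{\mathrm D}(0,2)$, so $\chat \setminus \S_k \subset \F$ should follow by a standard normal-families / escape argument for the tail sequence $\{Q_{M_k, n}\}$ composed with $Q_{M_k}$. Taking the union over $k$ gives $\chat \setminus \bigcap_k \S_k \subset \F$, i.e. $\J \subset \bigcap_k \S_k$.

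For the reverse inclusion $\bigcap_k \S_k \subset \J$, suppose $z \in \S_k$ for all $k$. By Remark~\ref{Invariance2}, each $\S_k$ is a union of $2^{M_k}$ components, each lying in the annulus $\overline{\mathrm D}(0,2) \setminus \overline{\mathrm D}(0,1)$, and the nesting $\S_{k+1} \subset \S_k$ is strict in the sense that each component of $\S_k$ contains several components of $\S_{k+1}$ (by Remark~\ref{Remark1}(\ref{Remark1c}), a component of $\S_k$ splits into $2^{m_{k+1}+1}$ subcomponents of $\S_{k+1}$). The standard way to show such a point is in the Julia set is to exhibit, near $z$, points whose orbits escape and points whose orbits remain bounded, contradicting normality of $\{Q_n\}$ at $z$ — or, more cleanly, to note that the boundary circles $\mathrm C(0,2)$ and $\mathrm C(0,1)$ pull back under $Q_{M_k}$ to curves separating the component of $\S_k$ containing $z$, and these separating curves accumulate on $z$; since points on one side escape (being outside $\S_{k'}$ for larger $k'$) while $z$ itself does not, normality fails at $z$. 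I would phrase this via: for any neighborhood $U$ of $z$, for $k$ large the component of $\S_k$ containing $z$ is contained in $U$ but $U$ also meets $\chat \setminus \S_{k+1} \subset \F$... actually the crispest argument is that $Q_{M_k}$ maps a neighborhood of $z$ onto a neighborhood of a point of $\mathrm C(0,2)$-vicinity and $\overline{\mathrm D}(0,2)$ contains Julia points of the tail system, or simply invoke that the components of $\S_k$ shrink to points (which needs the derivative estimates in Remark~\ref{Remark1}(\ref{Remark1b}): inverse branches contract by factor $<1/2$ at the $P_{M_k}$ steps and by controlled amounts at the squaring steps) so $\bigcap_k \S_k$ is totally disconnected, and then a connectedness/maximum-principle argument shows a neighborhood of $z$ cannot be contained in the Fatou set.

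The main obstacle I anticipate is the reverse inclusion, specifically making rigorous that $z \in \bigcap_k \S_k$ forces non-normality at $z$: one must either prove the components of $\S_k$ have diameters tending to zero (so that nearby points eventually leave $\S_k$ and escape, while $z$ stays bounded — the classic two-behaviors-in-every-neighborhood criterion for Julia membership) or use that $\partial \overline{\mathrm D}(0,2)$ pulls back to a subset of $\J$. The diameter decay requires quantitative control on the inverse branches of $Q_{M_{k-1}, M_k}$ on $\mathrm D(0,4)$; the squaring maps $z\mapsto z^{2^{m_k+1}}$ are expanding on $|z|>1$ so their inverse branches contract, and combined with the contraction from $P_{M_k}^{-1}$ noted in Remark~\ref{Remark1}(\ref{Remark1b}), each passage from stage $M_{k-1}$ to $M_k$ should contract diameters by a definite factor, giving exponential decay. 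Once both inclusions are established, the statement $\Jm = Q_m(\bigcap_k \S_k) = \bigcap_k Q_m(\S_k)$ follows immediately: the first equality is $\Jm = Q_m(\J)$, which is the case $0 \le 0 \le m$ of complete invariance applied with the roles set up so that $Q_{0,m} = Q_m$ maps $\J_0 = \J$ onto $\Jm$; the second equality is just continuity of $Q_m$ together with the nested decreasing structure of the compact sets $\S_k$, for which $Q_m(\bigcap_k \S_k) = \bigcap_k Q_m(\S_k)$ holds by a routine compactness argument.
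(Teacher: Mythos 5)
Your overall skeleton --- both inclusions plus complete invariance for $\Jm$ --- matches the paper's structure, and your forward inclusion ($\J \subset \bigcap_k \S_k$) is the paper's argument in outline. The paper fills in the escape estimate you flag as needing care: from $|Q_{M_k}(z)| > 2$ and inequality~\eqref{Invariance1}, one gets $|Q_{M_k + m_{k+1}}(z)| > \sqrt{|c_{k+1}|}+1$, hence $|Q_{M_{k+1}}(z)| > 5 > 4$, and then Lemma~\ref{Lem2.1}(\ref{Lemma2.1b}) gives monotone escape of the moduli $|Q_{M_j}(z)|$ and ultimately locally uniform escape.

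For the reverse inclusion there is a genuine gap in what you wrote. You want to contrast ``escaping points'' against ``points whose orbits remain bounded,'' and in particular you assert that a point $z \in \bigcap_k \S_k$ ``stays bounded.'' That is false in general: only the \emph{subsequence} $\{Q_{M_k}(z)\}$ is confined to $\overline{\mathrm D}(0,2)$. Between times $M_k$ and $M_{k+1}$ the orbit is pushed out, and in fact $|Q_{M_k+m_{k+1}}(z)| \ge \sqrt{|c_{k+1}|-2}$ (since $|Q_{M_{k+1}}(z)| \le 2$ forces this), which is unbounded precisely in the interesting case $\limsup |c_k| = +\infty$. Your two-behaviors argument can be repaired by working only with the subsequence $\{Q_{M_k}\}$ and by supplying the diameter decay you correctly identify as necessary (this is exactly what Lemma~\ref{Lem2.2}(\ref{Lem2.2b}) gives), but as written it does not go through. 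The paper's actual argument is different and sidesteps both issues: for $z \in \bigcap_k \S_k$, Lemma~\ref{Lem2.2}(\ref{Lem2.2a}) gives $|Q_{M_k}'(z)| \ge 2^{M_k} \to \infty$ while $|Q_{M_k}(z)| \le 2$; if some subsequence of $\{Q_{M_k}\}$ converged locally uniformly near $z$, the limit would have to be holomorphic there (because the values at $z$ are bounded), which is incompatible with the derivative blowing up at $z$. This derivative-explosion argument needs neither the component diameters to shrink nor any control of the orbit between the times $M_k$. Your final step (deducing the $\Jm$ formula from Theorem~\ref{ThmCompInvar} and compactness of the nested $\S_k$) is exactly the paper's.
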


Using this, we are able to prove the main result of our paper:

\begin{theorem}\label{ThmDichotomy}
For a sequence $\Pm$ as above, $\Jm$ is uniformly perfect for every $m \ge 0$ if and only if $\{c_k\}_{k=1}^\infty$ is bounded, and $\Jm$ is pointwise thin and HNUP  for every $m \ge 0$ if and only if $\{c_k\}_{k=1}^\infty$ is unbounded.
\end{theorem}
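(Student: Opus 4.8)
The plan is to prove the two equivalences in Theorem~\ref{ThmDichotomy} by exploiting the explicit geometric description of $\J = \bigcap_{k \ge 1} \S_k$ from Theorem~\ref{ThmJm}, together with the control on the survival sets $\S_k$ coming from Remark~\ref{Remark1}. Since $Q_m$ is a polynomial (hence locally a $K$-quasiconformal-type map, or rather a holomorphic map of bounded local degree away from finitely many critical points), uniform perfectness, pointwise thinness, and the HNUP property are all preserved and reflected under $Q_m$ on the relevant scales; so it suffices to handle the case $m = 0$, i.e. to analyze $\J$ itself, and then transport the conclusion to each $\Jm = Q_m(\J)$ using Theorem~\ref{ThmCompInvar}. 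Because ``uniformly perfect'' and ``pointwise thin/HNUP'' are, within our class, complementary (a pointwise thin compact set with more than one point is certainly not uniformly perfect), the theorem reduces to the two implications: (i) if $\{c_k\}$ is bounded then $\J$ is uniformly perfect, and (ii) if $\{c_k\}$ is unbounded then $\J$ is pointwise thin.

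For direction (i), suppose $|c_k| \le C$ for all $k$. Then $m_k$ can be taken bounded as well (the defining inequality $2^{2^{m_k}} \ge 2\sqrt{|c_k|}$ needs only $m_k$ up to some fixed $m_{\max}$), so the maps $Q_{M_{k-1},M_k}(z) = z^{2^{m_k+1}} + c_k$ range over a compact family of polynomials of bounded degree with bounded coefficients and leading coefficient $1$ — i.e. the sequence $\Pm$ is a bounded sequence in the sense defined in the excerpt. The idea is then to invoke (or reprove in this concrete setting) the known fact that Julia sets of bounded sequences are uniformly perfect: one shows there is a uniform lower bound on $\mathrm{diam}$ of each component of $\S_k$ relative to its distance to the rest of $\J$, using that each inverse branch of $Q_{M_{k-1},M_k}$ on $\mathrm{D}(0,4)$ has derivative and distortion uniformly controlled (the derivative of the inverse branches is $< 1/2$ as noted, and Koebe distortion on the fixed disc $\mathrm{D}(0,4)$ gives bounded eccentricity). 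Concretely, I would show that if a conformal annulus $A$ separates $\J$ and surrounds a point $z \in \J$, then pulling $A$ back through the branches realizing $z$ cannot shrink indefinitely because each stage contracts by a bounded factor and each $\S_k$ component has bounded geometry; hence $\mathrm{mod}\,A$ is bounded above by a constant depending only on $C$ (equivalently $d, K, M$). This is the standard ``bounded geometry $\Rightarrow$ uniform perfectness'' argument and should go through with the distortion estimates already implicit in Remark~\ref{Remark1}.

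For direction (ii), suppose $\{c_k\}$ is unbounded; pass to a subsequence along which $|c_{k_j}| \to \infty$. Fix $z \in \J$. For each $k$, $z$ lies in a unique component $V_k$ of $\S_k$, and $V_k \subset \overline{\mathrm D}(0,2)\setminus\overline{\mathrm D}(0,1)$ with $Q_{M_k}$ mapping $V_k$ univalently onto $\overline{\mathrm D}(0,2)$ (bounded degree on each piece). The key geometric point is that inside $V_k$ the next-level survival set $\S_{k+1}\cap V_k$ is, after applying $Q_{M_k}$, exactly $Q_{M_k,M_{k+1}}^{-1}(\overline{\mathrm D}(0,2))$, which by Remark~\ref{Remark1}(\ref{Remark1c}) consists of $2^{m_{k+1}+1}$ tiny components each of diameter $\le O(|c_{k+1}|^{-1/2})$ (since $P_{M_{k+1}}^{-1}(\overline{\mathrm D}(0,2))$ sits in two discs of radius $1$ about $\pm\sqrt{-c_{k+1}}$ which are at distance $\sim 2\sqrt{|c_{k+1}|}$ apart, and the further $z^2$-preimages only rescale within $\overline{\mathrm D}(0,2)$). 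Thus around $z$, inside $Q_{M_k}(V_k)=\overline{\mathrm D}(0,2)$, there is a round annulus separating the component of $\S_{k+1}$ containing $z$ from the others (or from the complement of $\overline{\mathrm D}(0,2)$), with modulus $\gtrsim \tfrac12\log|c_{k+1}|\to\infty$ along the subsequence. Pulling this annulus back by the univalent branch $(Q_{M_k}|_{V_k})^{-1}$ preserves the modulus exactly and keeps $z$ in the bounded complementary component; bounded distortion of this branch on $\overline{\mathrm D}(0,2)$ (via Koebe, since it extends to $\mathrm D(0,4)$) shows the Euclidean diameter of the pulled-back annulus is comparable to $\mathrm{diam}\,V_k$, which tends to $0$ since the $\S_k$ are nested with components shrinking (each stage contracts by a factor $\le 1/2$, so $\mathrm{diam}\,V_k \le 2^{-(k-1)}\cdot 4 \to 0$, and more). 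This produces the required sequence of separating conformal annuli of modulus $\to\infty$ and diameter $\to 0$ around an arbitrary $z\in\J$, giving pointwise thinness, hence HNUP; and of course a pointwise thin set with more than one point is not uniformly perfect, closing the dichotomy.

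The main obstacle I anticipate is the bookkeeping in direction (ii): one must verify that the large-modulus round annulus genuinely separates the $z$-component of $\S_{k+1}$ from \emph{all} of $\J$ (not just from the sibling components coming from $P_{M_{k+1}}^{-1}$), i.e. that no other branch of $\J$ sneaks into the gap annulus $\mathrm A\!\left(0, 1+\tfrac12, \sqrt{|c_{k+1}|}-1\right)$-type region after pulling back — this needs the nesting $\S_{k+1}\subset\S_k$ from Remark~\ref{Invariance2} plus the observation that $\S_{k+1}\cap V_k$ really is all of $Q_{M_k,M_{k+1}}^{-1}(\overline{\mathrm D}(0,2))$ inside $V_k$. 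A secondary technical point is making the distortion constants for the branches of $Q_{M_k}$ uniform in $k$ despite $M_k\to\infty$; this is where one uses that each elementary step is either $z\mapsto z^2$ or $z\mapsto z^2+c_k$ restricted to where its inverse branches have derivative $<1/2$ on $\overline{\mathrm D}(0,2)$, so the composition's inverse branch is a uniform contraction with Koebe-controlled distortion on the fixed domain $\overline{\mathrm D}(0,2)$, independent of $k$. Transporting everything from $\J$ to $\Jm$ via $Q_m$ is routine once one notes $Q_m$ has finitely many critical values and is a bounded-degree branched cover near $\Jm$, so moduli and diameters change by at most bounded factors on small scales.
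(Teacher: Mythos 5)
Your plan follows the paper's proof in all the essentials: reduce the two equivalences to proving ``bounded $\Rightarrow$ uniformly perfect'' (by citing the known result for bounded polynomial sequences) and ``unbounded $\Rightarrow$ pointwise thin'' (by producing separating annuli of modulus $\approx\tfrac12\log|c_k|$ around components of $\S_k$, lying inside shrinking components of $\S_{k-1}$). A few of the details differ and are worth flagging. First, the parenthetical ``$m_k$ can be taken bounded'' is unnecessary and not justified by the hypotheses---the $m_k$ are given, need only satisfy the lower bound~\eqref{Invariance*}, and could tend to infinity even when $|c_k|$ stays bounded---but this does not affect the conclusion, since the original \emph{quadratic} sequence $\Pm$ has degree $2$ and bounded coefficients regardless of the $m_k$, so Theorem~1.26 of~\cite{Sumi3} applies directly; the paper phrases it this way and does not reason about $m_k$ at all. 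Second, in the unbounded direction you look for a \emph{round} annulus inside $\overline{\mathrm D}(0,2)$ separating the $Q_{M_k}(z)$-component of $Q_{M_k,M_{k+1}}^{-1}(\overline{\mathrm D}(0,2))$ from its siblings, which requires the diameter/spacing estimates you only gesture at. The paper sidesteps these entirely by starting with the true round annulus $\mathrm{A}(\sqrt{-c_k},1,\sqrt{|c_k|})$, of modulus exactly $\tfrac12\log|c_k|$, at the $P_{M_k}^{-1}$ level and pulling it back conformally by an inverse branch of $z\mapsto z^{2^{m_k}}$ defined on a slit plane; the resulting \emph{conformal} annulus has the same modulus with no estimation needed, and inequality~\eqref{Invariance*} (rather than the weaker~\eqref{Invariance1}) is used precisely to ensure this pullback lands inside $\mathrm D(0,2)$. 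Third, your transport to $\Jm$ is forward through $Q_m$ with a claim that moduli and diameters change by bounded factors; this needs care, since $Q_m(A)$ separating $Q_m(\J)$ is not automatic even when $A$ separates $\J$ (other points of $\J$ far from $A$ could map into $Q_m(A)$). The paper transports \emph{backward}: it runs the argument on the truncated sequence to get separating conformal annuli about $\J_{M_k}$ inside $\overline{\mathrm D}(0,2)$, then pulls these back through the globally-defined inverse branches of $Q_{m,M_k}$ (whose sole critical value $c_k$ lies outside $\overline{\mathrm D}(0,2)$), preserving modulus exactly and avoiding the issue. The paper does note in a remark that forward transport can also be made to work, so your alternative is not wrong in principle, but it would require the very verification you wave away.
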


We have the following three important observations.

\begin{remark} \label{Remark3}

\begin{enumerate}[(a)]
\item We note that the existence of a HNUP Julia set is a new phenomenon related to non-autonomous dynamics of unbounded sequences that is not present in classical rational iteration or (non-elementary) semigroup dynamics. In particular, the Julia set of a rational function of degree two or more is uniformly
perfect (see~\cite{Er, Hi, MdR}). 
Also, the Julia set of a bounded sequence of polynomials is uniformly perfect (see Theorem 1.6 of~\cite{Sumi3}]). 

\item Furthermore, by~\cite{RS3}, the Julia set of any non-elementary rational semigroup $G$, which is allowed to contain or even consist of M\"obius maps, is uniformly perfect when there is a uniform upper bound on the Lipschitz constants (with respect to the spherical metric) of the generators of $G$.  Hence we justify our claim in (a) above as follows.    Suppose that $G$ is a non-elementary rational semigroup (i.e., its Julia set $J(G)$ is such that $\# J(G) \geq 3$), with no assumption regarding the Lipschitz constants of the generators.  Since the repelling fixed points of the elements of $G$ are then dense in $J(G)$ (see~\cite{StankewitzRepelDense2}), we may select distinct $a, b, c \in J(G)$ to be repelling fixed points of maps $f, g, h$ in $G$.  Denoting by $G'=\langle f, g, h \rangle$, the subsemigroup of $G$ generated by $f, g, h$, we then must have that $J(G)$ contains the uniformly perfect set $J(G')$, and hence $J(G)$ is not HNUP. 

\item If $J_{1}\subset \C $ and $J_{2}\subset \C$ are 
topological Cantor sets, $J_{1}$ is uniformly perfect, and $\varphi : \chat
\setminus J_{1} \rightarrow \chat \setminus J_{2}$ is a 
quasiconformal homeomorphism, then $J_{2}$ is also uniformly perfect.
Thus, if $J_{2}$ is a HNUP iterated Julia set (at some time $m \ge 0$) of some polynomial sequence (e.g. as in 
Theorem \ref{ThmDichotomy}) and
$J_{1}$ is a uniformly perfect iterated Julia set of some polynomial sequence
(e.g. the Julia set of iteration of a single polynomial of degree two or 
more), then there exists no quasiconformal map
$\varphi : \chat \setminus J_{1}\rightarrow \chat 
\setminus J_{2}.$ In particular, this implies that none of the sequences in Theorem \ref{ThmDichotomy} with HNUP iterated Julia sets can be conjugate via quasiconformal mappings to a sequence whose Julia sets are uniformly perfect Cantor sets.

\end{enumerate}
\end{remark}

Since our sets $\Jm$ are basically fractal constructions, it is of interest to know as much as possible about their Hausdorff dimensions $\HD (\Jm)$.

\begin{theorem}\label{ThmHDbound}
For any sequence  $\Pm$ as above, for each $m \ge 0$, $\HD (\Jm) \le 1$.
\end{theorem}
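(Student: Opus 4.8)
The plan is to realize $\J$ as the limit set of a Moran-type construction and run the routine covering argument, the only real input being a sharp estimate on the diameters of the components of the survival sets ${\mathcal S}_k$. First I would reduce to the case $m=0$: by Theorem~\ref{ThmCompInvar} we have $\Jm = Q_{0,m}(\J) = Q_m(\J)$, and since $\J \subset \overline{\mathrm D}(0,2)$ is compact while $Q_m$ is a polynomial, the restriction of $Q_m$ to $\overline{\mathrm D}(0,2)$ is Lipschitz; as Lipschitz maps do not increase Hausdorff dimension, it suffices to show $\HD(\J)\le 1$. By Theorem~\ref{ThmJm}, $\J=\bigcap_{k\ge 1}{\mathcal S}_k$, so for every $k$ the components of ${\mathcal S}_k$ form a cover of $\J$, and by Remark~\ref{Invariance2} there are exactly $2^{M_k}$ of them.

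Next comes the diameter bound, which is where the work lies. By Remark~\ref{Remark1}(\ref{Remark1c}), $Q_{M_{j-1},M_j}(z)=z^{2^{m_j+1}}+c_j$ has a full set of $2^{m_j+1}$ inverse branches on ${\mathrm D}(0,4)$, each of which maps $\overline{\mathrm D}(0,2)$ onto one component of $Q_{M_{j-1},M_j}^{-1}(\overline{\mathrm D}(0,2))\subset\overline{\mathrm D}(0,2)\setminus\overline{\mathrm D}(0,1)$; moreover, from $|c_j|>4$ and \eqref{Invariance1} one checks that each such branch in fact maps ${\mathrm D}(0,4)$ into ${\mathrm D}(0,4)$. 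Hence every component $V$ of ${\mathcal S}_k$ has the form $V=G(\overline{\mathrm D}(0,2))$ with $G=g_1\circ g_2\circ\cdots\circ g_k$ a composition of one such branch $g_j$ for each $j$, and $G$ is holomorphic on the convex disc ${\mathrm D}(0,4)\supset\overline{\mathrm D}(0,2)$. For $w\in\overline{\mathrm D}(0,2)$ the chain rule gives
\[ |G'(w)| \;=\; \prod_{j=1}^{k}\bigl|Q'_{M_{j-1},M_j}(\eta_j)\bigr|^{-1},\qquad \eta_j:=(g_j\circ g_{j+1}\circ\cdots\circ g_k)(w)\in\overline{\mathrm D}(0,2)\setminus\overline{\mathrm D}(0,1), \]
and since $Q'_{M_{j-1},M_j}(z)=2^{m_j+1}z^{2^{m_j+1}-1}$ satisfies $|Q'_{M_{j-1},M_j}(\eta_j)|=2^{m_j+1}|\eta_j|^{2^{m_j+1}-1}\ge 2^{m_j+1}$ (as $|\eta_j|\ge 1$), we obtain
\[ |G'(w)|\;\le\;\prod_{j=1}^{k}2^{-(m_j+1)}\;=\;2^{-\sum_{j=1}^k(m_j+1)}\;=\;2^{-M_k}\qquad\text{for all } w\in\overline{\mathrm D}(0,2). \]
By the mean value inequality on the convex set ${\mathrm D}(0,4)$, $\diam V=\diam G(\overline{\mathrm D}(0,2))\le 2^{-M_k}\,\diam\overline{\mathrm D}(0,2)=4\cdot 2^{-M_k}$.

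To finish, fix $s>1$. Since $M_k=\sum_{j=1}^k(m_j+1)\ge k$, the covers of $\J$ by the components of ${\mathcal S}_k$ have mesh at most $4\cdot 2^{-M_k}\to 0$, and
\[ {\mathcal H}^{s}(\J)\;\le\;\liminf_{k\to\infty}\sum_{V\text{ a component of }{\mathcal S}_k}(\diam V)^{s}\;\le\;\liminf_{k\to\infty}2^{M_k}\bigl(4\cdot 2^{-M_k}\bigr)^{s}\;=\;4^{s}\liminf_{k\to\infty}2^{(1-s)M_k}\;=\;0. \]
Thus ${\mathcal H}^{s}(\J)=0$ for every $s>1$, so $\HD(\J)\le 1$, and therefore $\HD(\Jm)\le 1$ for every $m\ge 0$ by the reduction above. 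The one place requiring genuine care is the derivative estimate: the expansion factor $2^{m_j+1}$ contributed by the $j$th block equals exactly the number $2^{m_j+1}$ of its inverse branches, so the telescoping product $2^{-M_k}$ offsets the $2^{M_k}$ pieces of ${\mathcal S}_k$ precisely, and the exponent $1$ emerges with no slack — consistent with the companion fact that the value $\HD=1$ is attained. What makes the estimate legitimate is the confinement ${\mathcal S}_k\subset\overline{\mathrm D}(0,2)\setminus\overline{\mathrm D}(0,1)$ from Remark~\ref{Remark1}(\ref{Remark1c}), which forces $|\eta_j|\ge 1$ and hence $|Q'_{M_{j-1},M_j}(\eta_j)|\ge 2^{m_j+1}$, together with the fact that each $G$ is holomorphic on the convex disc ${\mathrm D}(0,4)$ so that the elementary mean value inequality applies.
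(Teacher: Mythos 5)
Your proof is correct and follows essentially the same route as the paper's: reduce to $m=0$ via complete invariance and Lipschitz (1-H\"older) maps, cover $\J$ by the $2^{M_k}$ components of ${\mathcal S}_k$, bound each component's diameter by $4\cdot 2^{-M_k}$ via the derivative estimate on the inverse branches, and conclude by a covering argument. The only cosmetic differences are that you derive the contraction bound $2^{-(m_j+1)}$ per block directly from $|\eta_j|\ge 1$ via the chain rule rather than invoking Lemma~\ref{Lem2.2}(\ref{Lem2.2b}) and the NCIFS formalism of Section~\ref{SectHD}, and you finish by showing ${\mathcal H}^s(\J)=0$ for $s>1$ rather than ${\mathcal H}^1(\J)\le 4<\infty$.
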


On the other hand, hereditarily non uniformly perfect is a notion of thinness of sets and it is therefore interesting to find examples of HNUP sets which nevertheless have positive Hausdorff dimension as was done by Stankewitz, Sugawa, and Sumi in \cite{SSS}. This is also the case with our examples, and the upper bound given in the statement of the above result can, in fact, be attained.

\begin{theorem}\label{ThmHDmax}
There exists a sequence $\Pm$ as above such that, for each $m \ge 0$, $\Jm$ is pointwise thin and HNUP but $\HD (\Jm) = 1$.\end{theorem}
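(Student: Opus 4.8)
The plan is to construct the sequence $\{c_k\}_{k=1}^\infty$ (hence the $\{m_k\}$ and the polynomial sequence $\Pm$) explicitly so that the two competing requirements—thinness and full Hausdorff dimension—are simultaneously met. Unboundedness of $\{c_k\}$ gives pointwise thinness and the HNUP property for free by Theorem~\ref{ThmDichotomy}, so the only real work is to force $\HD(\mathcal{J}) = 1$ (the bound $\HD(\mathcal{J}_m)\le 1$ is Theorem~\ref{ThmHDbound}, and since each $\mathcal{J}_m = Q_m(\mathcal{J})$ is the image of $\mathcal{J}$ under a locally bi-Lipschitz—indeed conformal—map off a finite set, it suffices to treat $m=0$). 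The idea is that $|c_k|$ must go to infinity, but it may do so \emph{arbitrarily slowly}, and by letting it grow slowly while the ``waiting times'' $m_k$ grow fast, the set $\mathcal{J} = \bigcap_k \mathcal{S}_k$ from Theorem~\ref{ThmJm} behaves, over long stretches of the construction, almost like the Julia set of a slowly varying quadratic sequence and thus should carry dimension close to $1$.

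The technical engine is to view $\mathcal{J}$ as the limit set of a non-autonomous conformal iterated function system and apply the version of Bowen's formula from Rempe-Gillen and Urb\'anski~\cite{RU}. Concretely, by Remark~\ref{Remark1}(\ref{Remark1c}) and Remark~\ref{Invariance2}, at each stage $k$ the set $\mathcal{S}_k$ is cut out of $\mathcal{S}_{k-1}$ by taking the $2^{m_k+1}$ inverse branches of $Q_{M_{k-1},M_k}$ (equivalently, $2^{m_k}$ inverse branches of $z\mapsto z^{2^{m_k}}$ composed with the two inverse branches of $z^2+c_k$), each mapping $\overline{\mathrm D}(0,2)$ into the annulus $\overline{\mathrm D}(0,2)\setminus\overline{\mathrm D}(0,1)$. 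These inverse branches have uniformly bounded distortion on $\mathrm D(0,4)$ (the derivative of each inverse branch of $z^2+c_k$ has modulus $<1/2$ there by Remark~\ref{Remark1}(\ref{Remark1b}), and the $2^{m_k}$-th root branches have controlled distortion on the relevant annulus since everything stays in $\overline{\mathrm A}(0,1,2)$). So the contraction ratio of a generating branch at stage $k$ is comparable, up to a bounded multiplicative constant, to $2^{-(m_k+1)}\cdot|c_k|^{-1/2}$-type quantities; more precisely the derivative of the inverse of $z^2+c_k$ at a point of $\overline{\mathrm D}(0,2)$ has modulus comparable to $|c_k|^{-1/2}$, while each inverse branch of $z^{2^{m_k}}$ on the annulus $\overline{\mathrm A}(0,1,\sqrt{|c_k|}+1)$ contracts by a factor comparable to (radius)${}^{1/2^{m_k}}\cdot 2^{-m_k}$. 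One then writes the Bowen-type pressure function: $\HD(\mathcal{J})$ is the value $t$ at which the appropriate limit of $\tfrac{1}{M_k}\log\sum(\text{diam of stage-}k\text{ pieces})^t$ (or the $\liminf$ of the partition-function exponents, as set up in~\cite{RU}) equals zero. Because there are $2^{M_k}$ pieces at stage $k$, the ``counting'' term contributes $\log 2$ per unit of $M_k$, so the pressure at exponent $t$ looks schematically like $\limsup_k \tfrac{1}{M_k}\big(M_k\log 2 + t\sum_{j\le k}(m_j+1)\log(\text{typical contraction}_j)\big)$; the point is to arrange that the contractions are, on average, barely stronger than $2^{-1}$ per step, so that the zero of the pressure is pushed up to $t=1$.

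The key estimate to nail down is therefore: the stage-$j$ contribution to the log-contraction, per polynomial in the block of length $m_j+1$, is $-\log 2 - O\big(\tfrac{\log|c_j|}{m_j}\big)$, i.e. the ``excess'' beyond the ideal $z\mapsto z^2$ contraction is $O(\log|c_j|)$ total for the whole block (coming almost entirely from the single $z^2+c_j$ step, since the pure squarings contract essentially like $z^2$ up to the geometry of the annulus $\overline{\mathrm A}(0,1,2)$, which costs only $O(1)$ per block). Summing, $\HD(\mathcal{J})\ge t$ whenever $\limsup_k \tfrac{1}{M_k}\sum_{j\le k} C\log|c_j| $ can be made $\le (1-t)\log 2$ for that $t$; choosing, say, $|c_k|$ to grow like $\log k$ (or any sequence tending to infinity) while $m_k$—and hence $M_k = \sum_{j\le k}(m_j+1)$—grows fast enough that $\sum_{j\le k}\log|c_j| = o(M_k)$, we get $\HD(\mathcal{J})\ge 1-\delta$ for every $\delta>0$, hence $\HD(\mathcal{J})=1$ by Theorem~\ref{ThmHDbound}. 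One must also check the hypotheses under which the Bowen formula of~\cite{RU} applies to this non-autonomous CIFS (bounded distortion, the pieces shrinking to points, a uniform cone/separation condition so that the limit set is genuinely Cantor and the symbolic coding is faithful, and the relevant ``sub-'' or ``super-exponential'' regularity of the system)—all of which follow from Remark~\ref{Remark1} and the fact that everything is confined to $\overline{\mathrm A}(0,1,2)$ with branches of bounded distortion on the larger disc $\mathrm D(0,4)$.

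The main obstacle I expect is making the distortion and contraction-ratio bookkeeping for the \emph{long} blocks of pure squarings genuinely uniform: a block contributes $m_k$ compositions of $z\mapsto z^2$ restricted to a thin annulus, and one needs that the composite inverse branch still has bounded distortion on a definite neighborhood and contracts by a factor whose logarithm is $-m_k\log 2 + O(1)$ rather than $-m_k\log 2 + O(m_k)$—otherwise the excess is not summable against $M_k$ and the dimension estimate collapses. This requires choosing the intermediate domains in the CIFS carefully (e.g. pulling back round annuli rather than discs, exploiting that $\overline{\mathrm A}(0,1,\sqrt{|c_k|}+1)$ pulls back under $z^{2^{m_k}}$ to annuli of moduli $\approx 2^{-m_k}\log(\sqrt{|c_k|}+1)\to 0$, and controlling distortion via the modulus), and verifying that Koebe-type distortion control survives composition along the block. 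Once that is in place, the choice of $\{c_k\}$ and $\{m_k\}$ and the final invocation of Bowen's formula are routine; and pointwise thinness and HNUP come immediately from Theorem~\ref{ThmDichotomy} since $\{c_k\}$ is unbounded.
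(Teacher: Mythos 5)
Your proposal follows essentially the same route as the paper: view $\J$ as the limit set of the NCIFS whose $k$-th generation consists of the $2^{m_k+1}$ inverse branches of $Q_{M_{k-1},M_k}$, and apply the Bowen-type formula of Rempe-Gillen--Urb\'anski (Proposition 1.3 of \cite{RU}, stated here as Theorem~\ref{BowenFormula}) after choosing $\{m_k\}$ to grow so that $\log|c_k|$ is negligible relative to $M_k$ (the paper takes $c_k=k+4$, $m_k=k+1$, making both $a_k$ and $b_k$ converge to $\log 2$). The distortion obstacle you flag for long blocks of squarings does not in fact arise: since $Q_{M_{k-1},M_k}(z)=z^{2^{m_k+1}}+c_k$, each block inverse is the single power map $f(z)=(z-c_k)^{1/2^{m_k+1}}$ whose derivative is computed exactly in Lemma~\ref{Lem2.2}(\ref{Lem2.2b}), with distortion on $\overline{\mathrm D}(0,2)$ bounded by $(|c_k|+2)/(|c_k|-2)\le 3$ uniformly in $m_k$.
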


The organization of the remainder of this paper is as follows.  In Section~\ref{SectProofThm1.2and1.3}, we state and prove some ancillary lemmas and give the proofs of Theorems~\ref{ThmJm} and~\ref{ThmDichotomy}. Roughly speaking, Theorem~\ref{ThmJm} says that the Julia set $\mathcal{J}$ is the limit set of a suitable non-autonomous conformal iterated function system, as considered in the paper of Rempe-Gillen and Urb\'{a}nski~\cite{RU}. This is the point of view we will adopt in Section~\ref{SectHD} when we turn to considering the Hausdorff dimensions of the iterated Julia sets.  In particular, we use it to prove Theorem~\ref{ThmHDbound}, and then, using Bowen's formula given in~\cite{RU} (restated here as Theorem~\ref{BowenFormula}), we show that we can choose our sequence of constants $\{c_k\}_{k=1}^\infty$ and integers $\{m_k\}_{k=1}^\infty$ to prove Theorem~\ref{ThmHDmax}, that is, to obtain the highest possible Hausdorff dimension.

\section{Proofs of Theorems~\ref{ThmJm} and~\ref{ThmDichotomy}}\label{SectProofThm1.2and1.3}

We first prove two small lemmas which will be of use to us in obtaining Theorems~\ref{ThmJm} and~\ref{ThmDichotomy} on characterizing the iterated Julia sets and obtaining HNUP examples (respectively).

\begin{lemma}\label{Lem2.1} For a sequence $\Pm$ as above, we have the following.
\begin{enumerate}[(a)]

\item \label{Lemma2.1b} For any $k \ge 1$ and any $n \ge 2$, if $|z| > 2^n$, then

\[ |Q_{M_{k-1},M_{k}}(z)| > 2^{n+1}.\]

\item \label{Lemma2.1c} For each $z_0 \in \bigcap_{k \ge 1} {\mathcal S}_k$, the orbit $\{Q_m(z_0)\}_{m=0}^\infty$ lies entirely outside of the closed unit disk.

\end{enumerate}
\end{lemma}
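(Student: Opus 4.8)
\textbf{Part (a).} The plan is to exploit the explicit form $Q_{M_{k-1},M_k}(z) = z^{2^{m_k+1}} + c_k$. If $|z| > 2^n$ with $n \ge 2$, then $|z^{2^{m_k+1}}| = |z|^{2^{m_k+1}} \ge |z|^{4} > 2^{4n}$ since $m_k \ge 1$ forces $2^{m_k+1} \ge 4$. I would then absorb the additive constant: since $|c_k| \le 2^{2^{m_k}}\cdot(\text{something})$ is the wrong direction, instead I note $|z|^{2^{m_k+1}} \ge |z|^2 \cdot |z|^{2^{m_k+1}-2} > 2^{2n}\cdot 2^{2(2^{m_k+1}-2)}$, and more crudely $|Q_{M_{k-1},M_k}(z)| \ge |z|^{2^{m_k+1}} - |c_k|$. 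The cleanest route is actually to factor through the intermediate map: $Q_{M_{k-1},M_{k-1}+m_k}(z) = z^{2^{m_k}}$, so $|z|>2^n$ gives $|z^{2^{m_k}}| > 2^{n 2^{m_k}} \ge 2^{2n}$ (using $m_k\ge 1$), and this lies well inside the region where $P_{M_k}(w) = w^2 + c_k$ satisfies $|P_{M_k}(w)| \ge |w|^2 - |c_k| > 2^{n+1}$ once $|w|$ is large compared to $\sqrt{|c_k|}$; the bound~\eqref{Invariance*} or~\eqref{Invariance1} relating $2^{2^{m_k}}$ to $\sqrt{|c_k|}$ is exactly what makes $2^{2n} \ge 4|c_k|$ hold (for $n \ge 2$), whence $|w|^2 - |c_k| \ge \tfrac34 |w|^2 > 2^{n+1}$. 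A short case-check on the constants finishes it; this is routine.

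\textbf{Part (b).} Here the plan is a telescoping argument across the blocks. Fix $z_0 \in \bigcap_k \mathcal S_k$. By Remark~\ref{Invariance2}, $\mathcal S_k \subset \overline{\mathrm D}(0,2)\setminus \overline{\mathrm D}(0,1)$, so $|z_0| > 1$ already. The key point is that the orbit $\{Q_m(z_0)\}$ returns, at the block endpoints $m = M_k$, to the set $Q_{M_k}(\mathcal S_{k+1}) = Q_{M_k,M_{k+1}}^{-1}(\overline{\mathrm D}(0,2)) \subset \overline{\mathrm D}(0,2)\setminus \overline{\mathrm D}(0,1)$ (again Remark~\ref{Invariance2}), so $|Q_{M_k}(z_0)| > 1$ for every $k$. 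It remains to control the intermediate times $M_{k-1} < m < M_k$ within a block. On such a block the maps applied are $z \mapsto z^2$ (possibly many times) followed, at the very end, by $z\mapsto z^2 + c_k$. Starting from $|Q_{M_{k-1}}(z_0)| > 1$: the pure squarings only increase the modulus (if $|w|>1$ then $|w^2| = |w|^2 > 1$), so $|Q_m(z_0)| > 1$ for all $M_{k-1} \le m \le M_{k-1}+m_k$. For the final step into time $M_k$ we already know the output has modulus $>1$ from the block-endpoint observation above, so no separate estimate is needed. Assembling: every $m \ge 0$ lies in some block $[M_{k-1}, M_k]$, and in all cases $|Q_m(z_0)| > 1$.

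\textbf{Main obstacle.} I expect the only real friction to be in part (a): making sure the constants line up so that the additive term $c_k$ is genuinely negligible for \emph{every} admissible $n \ge 2$ and every $k$, using only the defining inequalities~\eqref{Invariance*}/\eqref{Invariance1} and $|c_k| > 4$. The resolution is to route the estimate through the two-stage decomposition $Q_{M_{k-1},M_k} = P_{M_k} \circ Q_{M_{k-1},M_{k-1}+m_k}$ so that the squaring stage first blows $|z|$ up past $2\sqrt{|c_k|}$ (guaranteed by~\eqref{Invariance*}, or past $\sqrt{|c_k|}+1$ by~\eqref{Invariance1}), after which the $+c_k$ perturbation cannot prevent the modulus from at least doubling. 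Part (b) is essentially bookkeeping once the ``return to the annulus at block endpoints'' observation from Remark~\ref{Invariance2} is in hand, combined with the trivial monotonicity of $w \mapsto |w|^2$ on $\{|w|>1\}$.
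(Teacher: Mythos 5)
Your plan for part (a) is sound but takes a genuinely different route from the paper, and there is a constants slip you should fix. The paper avoids the two-stage decomposition entirely: it introduces the auxiliary function $f(x,y)=2^{xy}-2^{y}-2^{x+1}$, shows $f>0$ on $[2,\infty)\times[4,\infty)$ via the mean value theorem and positivity of the partial derivatives, and then applies this with $x=n$, $y=2^{m_k+1}$ together with $|c_k|<2^{2^{m_k+1}}$ (from~\eqref{Invariance1}) to get $|z^{2^{m_k+1}}+c_k|>(2^n)^{2^{m_k+1}}-2^{2^{m_k+1}}>2^{n+1}$ in one line. Your route through $Q_{M_{k-1},M_k}=P_{M_k}\circ Q_{M_{k-1},M_{k-1}+m_k}$ is more geometric and matches the picture in Figure~\ref{Preimages}, but the line ``$2^{2n}\ge 4|c_k|$'' is false as stated ($n$ is fixed while $|c_k|$ can be arbitrarily large). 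What you should write is that $|w|>2^{n\cdot 2^{m_k}}\ge 2^{2\cdot 2^{m_k}}=(2^{2^{m_k}})^2$, which by~\eqref{Invariance1} (or~\eqref{Invariance*}) exceeds $|c_k|$, whence $|w|^2>|c_k|^2>4|c_k|$ (using $|c_k|>4$), and then $|w|^2-|c_k|>\tfrac34|w|^2\ge\tfrac34\cdot 2^{4n}>2^{n+1}$ for $n\ge 2$. The error is purely a transcription slip: you weakened $2^{n\cdot 2^{m_k}}$ to $2^{2n}$ and then tried to recover the needed comparison with $|c_k|$ from the weakened bound, which does not work; retaining the factor $2^{m_k}$ and squaring~\eqref{Invariance1} is exactly what closes the gap.

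For part (b) your argument is essentially the paper's proof recast directly rather than by contradiction. The paper takes the minimal $m_0$ with $Q_{m_0}(z_0)\in\overline{\mathrm D}(0,1)$, rules out $m_0\ne M_k$ because then $P_{m_0}(z)=z^2$ would force $Q_{m_0-1}(z_0)\in\overline{\mathrm D}(0,1)$ (contradicting minimality), and rules out $m_0=M_{k_0}$ by Remark~\ref{Invariance2}. You use precisely the same two ingredients---$|Q_{M_k}(z_0)|>1$ at block endpoints from Remark~\ref{Invariance2}, and squaring preserving $\{|w|>1\}$ at intermediate times---assembled forward instead of by contradiction. Both are correct and of comparable length; the choice is stylistic.
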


\begin{proof}
For part~(\ref{Lemma2.1b}), we first note that $f(x, y)=2^{xy}-2^y-2^{x+1} \ge f(2, 4)>0$ for all $(x, y) \in R := [2, +\infty) \times [4, +\infty)$, which is an immediate consequence of the mean value theorem and the fact that the partial derivatives $f_x$ and $f_y$ are each strictly positive on $R$.

Now let $|z| > 2^n$ where $n \geq 2$.  Since $|c_{k}| <  2^{2^{m_{k}+1}}$ by inequality~\eqref{Invariance1}, applying the above using $x=n$ and $y=2^{m_{k}+1}$ gives
$|Q_{M_{k-1},M_{k}}(z)| = |z^{2^{m_{k}+1}}+c_{k}|> (2^n)^{2^{m_{k}+1}}  -  2^{2^{m_{k}+1}} > 2^{n+1}.$

We let $z_0 \in \bigcap_{k \ge 1} {\mathcal S}_k$, and prove part~(\ref{Lemma2.1c}) by contradiction.  Suppose not, and call $m_0$ the smallest index such that $Q_{m_0}(z_0) \in \overline {\mathrm D}(0,1)$.  If $m_0$ is not equal to any $M_k$ (note that, since $M_0 = 0$, in particular this implies that $m_0 \ge 1$), then $P_{m_0}(z)=z^2$ and we have a contradiction (to the minimality of $m_0$) since that would imply $Q_{m_0-1}(z_0) \in \overline {\mathrm D}(0,1)$ (else we could not have $P_{m_0}(Q_{m_0-1}(z_0))=Q_{m_0}(z_0) \in \overline {\mathrm D}(0,1)$).  However, if $m_0=M_{k_0}$ for some $k_0 \ge 0$, then we see that, since $z_0 \in  {\mathcal S}_{k_0+1}$, Remark~\ref{Invariance2} gives that $Q_{M_{k_0}}({\mathcal S}_{k_0+1}) = Q_{M_{k_0},M_{k_0+1}}^{-1}(\overline {\mathrm D}(0, 2))\subset \overline {\mathrm D}(0, 2) \setminus \overline {\mathrm D}(0, 1)$, which yields a contradiction.
\end{proof}

\begin{lemma}\label{Lem2.2}
Let $\Pm$ be a sequence of quadratic polynomials as above:

\begin{enumerate}[(a)]
\item \label{Lem2.2a} For $0 \le m < n$ and $z \in Q_m  \left (\bigcap_{k \ge 1} {\mathcal S}_k \right )$, we have $|Q'_{m,n}(z)| \ge 2^{n-m}$.

\item \label{Lem2.2b} Let $k \ge 1$ and let $f(z) = (z-c_k)^{1/2^{m_k+1}}$ be any inverse branch of $Q_{M_{k-1},M_k}$, which is defined on ${\mathrm D}(0,4)$.  Then, for $0 \leq \epsilon \leq 1$, we have $$\sup\{|f'(z)|:z\in \overline{{\mathrm D}}(0,2+\epsilon)\}=\frac{1}{2^{m_k+1}} (|c_k|-2-\epsilon)^{\left(\frac{1}{2^{m_k+1}}-1\right)} \leq \eta_\epsilon,$$ where $\eta_\epsilon :=\frac{1}{2^{1+1}} (2-\epsilon)^{\left(\frac{1}{2^{1+1}}-1\right)}$.  In particular, using $\epsilon =0$ gives
    $$\sup\{|f'(z)|:z\in \overline{{\mathrm D}}(0,2)\}=\frac{1}{2^{m_k+1}} (|c_k|-2)^{\left(\frac{1}{2^{m_k+1}}-1\right)} \leq \eta=2^{-\frac{11}{4}}<1.$$
\end{enumerate}
\end{lemma}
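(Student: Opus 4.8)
The plan is to prove both parts by direct differentiation, exploiting that every map in our sequence satisfies $P_j'(w) = 2w$. For part~(\ref{Lem2.2a}), write $z = Q_m(z_0)$ with $z_0 \in \bigcap_{k\ge1}{\mathcal S}_k$, so that $Q_{m,j}(z) = Q_{m,j}(Q_m(z_0)) = Q_j(z_0)$ for every $j \ge m$. The chain rule then gives
\[ Q_{m,n}'(z) = \prod_{j=m+1}^{n} P_j'\!\bigl(Q_{m,j-1}(z)\bigr) = \prod_{j=m+1}^{n} 2\,Q_{m,j-1}(z) = 2^{\,n-m}\prod_{j=m}^{n-1} Q_j(z_0). \]
By Lemma~\ref{Lem2.1}(\ref{Lemma2.1c}) each orbit point $Q_j(z_0)$ lies outside the closed unit disc, so every factor in the last product has modulus $>1$; hence $|Q_{m,n}'(z)| \ge 2^{\,n-m}$.

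For part~(\ref{Lem2.2b}), recall from Remark~\ref{Remark1}(\ref{Remark1c}) that $Q_{M_{k-1},M_k}(z) = z^{2^{m_k+1}} + c_k$ omits its only critical value $c_k$ on ${\mathrm D}(0,4)$ (as $|c_k| > 4$), so each inverse branch is a single-valued holomorphic function there of the form $f(z) = \zeta\,(z - c_k)^{1/2^{m_k+1}}$ for some $2^{m_k+1}$-st root of unity $\zeta$. Setting $N := 2^{m_k+1}$, one obtains, independently of the branch, $|f'(z)| = \tfrac1N\,|z - c_k|^{\frac1N - 1}$. Since $\overline{{\mathrm D}}(0,2+\epsilon) \subset {\mathrm D}(0,4)$ whenever $0 \le \epsilon \le 1$, the supremum is attained; and because the exponent $\tfrac1N - 1$ is \emph{negative}, $|f'|$ is largest where $|z - c_k|$ is smallest, i.e.\ at the point $(2+\epsilon)c_k/|c_k|$ of $\overline{{\mathrm D}}(0,2+\epsilon)$ nearest to $c_k$, which lies at distance $|c_k| - 2 - \epsilon$ (positive since $|c_k| > 4$ and $\epsilon \le 1$). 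This gives the stated formula for the supremum.

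To bound $\tfrac1N(|c_k| - 2 - \epsilon)^{\frac1N - 1}$ by $\eta_\epsilon = \tfrac14(2-\epsilon)^{\frac14 - 1}$, proceed in two steps. First, $|c_k| > 4$ yields $|c_k| - 2 - \epsilon > 2 - \epsilon \ge 1$, so (the exponent being negative) $(|c_k| - 2 - \epsilon)^{\frac1N - 1} \le (2 - \epsilon)^{\frac1N - 1}$. Second, $N = 2^{m_k+1} \ge 4$ (here $m_k \ge 1$, since $m_k = 0$ would contradict~\eqref{Invariance1} given $|c_k| > 4$), so $\tfrac1N \le \tfrac14$, and since $2 - \epsilon \ge 1$ we also have $(2-\epsilon)^{\frac1N - 1} \le (2-\epsilon)^{\frac14 - 1}$. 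Chaining these inequalities gives the bound by $\eta_\epsilon$; taking $\epsilon = 0$ gives $\eta = \tfrac14 \cdot 2^{-3/4} = 2^{-11/4} < 1$.

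The arguments are entirely routine; the only point that needs care is the negative exponent $\tfrac1N - 1$ in part~(\ref{Lem2.2b}), which reverses the expected monotonicity, so that the supremum of $|f'|$ sits at the point of $\overline{{\mathrm D}}(0,2+\epsilon)$ nearest to $c_k$ rather than farthest. This is exactly what forces the two-step comparison above: one first discards the dependence on $c_k$ by replacing the base $|c_k|-2-\epsilon$ with $2-\epsilon$, and only then weakens the exponent from $\tfrac1N - 1$ to $\tfrac14 - 1$, arriving at a bound uniform in $k$.
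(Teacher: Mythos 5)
Your proof is correct and follows essentially the same route as the paper's: part (a) is the chain-rule expansion of the paper's one-line observation that each $P_j'(w)=2w$ has modulus $>2$ along orbits lying outside $\overline{\mathrm D}(0,1)$ (Lemma~\ref{Lem2.1}(\ref{Lemma2.1c})), and part (b) is the same two-step monotonicity estimate on $\tfrac1N|z-c_k|^{1/N-1}$, first comparing bases using $|c_k|>4$ and then comparing exponents using $N=2^{m_k+1}\ge4$. Your explicit justification that $m_k\ge1$ via \eqref{Invariance1} and the identification of the maximizing point $(2+\epsilon)c_k/|c_k|$ are fine additions of detail, but the argument is the paper's.
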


\begin{proof}
Part~(\ref{Lem2.2a}) follows immediately by Lemma~\ref{Lem2.1}(\ref{Lemma2.1c}) and the fact that the absolute value of the derivative of any quadratic of the form $z^2 + c$ is greater than $2$ at any point outside the closed unit disk.

To prove part~(\ref{Lem2.2b}), note that $|f'(z)|=\frac{1}{2^{m_k+1}} |z-c_k|^{\left(\frac{1}{2^{m_k+1}}-1\right)}$.  Since $|c_k|>4$ and $m_{k} \geq 1$, we then have that
\begin{eqnarray*}
\sup\{|f'(z)|:z \in {\mathrm D}(0,2+\epsilon) \}&=&\frac{1}{2^{m_k+1}} (|c_k|-2-\epsilon)^{\left(\frac{1}{2^{m_k+1}}-1\right)}\\
 &\leq& \frac{1}{2^{m_k+1}} (2-\epsilon)^{\left(\frac{1}{2^{m_k+1}}-1\right)}\\
 &\leq& \frac{1}{2^{1+1}} (2-\epsilon)^{\left(\frac{1}{2^{1+1}}-1\right)}=\eta_\epsilon.  \qedhere
\end{eqnarray*}
\end{proof}

\begin{proof}[Proof of Theorem~\ref{ThmJm}]
We first prove the result for $m = 0$, basing our proof on showing that $\bigcap_{k \ge 1} {\mathcal S}_k$ is
precisely the set of points whose orbits do not escape locally uniformly to infinity.

Suppose first that $z \notin \bigcap_{k \ge 1} {\mathcal S}_k$, i.e., $|Q_{M_k}(z)| > 2$ for some $k$. From~\eqref{Invariance1} we then get that $|Q_{{M_k}+m_{k+1}}(z)| > \sqrt{|c_{k+1}|} + 1$ and so, since $|c_{k+1}| > 4$, we obtain $|Q_{M_{k+1}}(z)| > 5 > 4$. It then follows easily from Lemma~\ref{Lem2.1}(\ref{Lemma2.1b}) that $Q_{M_j}(z)\to \infty$ as $j \to \infty$.  Note that, for each $j \ge k$ and $0 \leq N  \le m_{j+1}$, since $Q_{M_j,M_j+N}(z)=z^{2^N}$, we see that $|Q_{M_j+N}(z)|=|Q_{M_j,M_j+N}(Q_{M_j}(z))|>|Q_{M_j}(z)|$.  From this it clearly follows that $Q_{m}(z)\to \infty$ as $m \to \infty$, and at a rate which is locally uniform, whence we must have that $z \in {\mathcal F}$.

On the other hand, let $z \in  \bigcap_{k \ge 1} {\mathcal S}_k$.  Then $|Q_{M_k}(z)| \le 2$ for every $k$, while  Lemma~\ref{Lem2.2}(\ref{Lem2.2a}) yields that $|Q_{M_k}'(z)| > 2^{M_k} \to \infty$ as $k \to \infty$.  This shows that no subsequence of $\{Q_{M_k}\}$ can converge locally uniformly (to what would have to be a holomorphic function) in any neighbourhood of $z$, whence $z \in \J$ as desired.

The result for all $m \ge 0$ then follows immediately from complete invariance (Theorem~\ref{ThmCompInvar}) and the fact that the sets ${\mathcal S}_k$ are nested and compact.
\end{proof}

\newpage

\afterpage{\clearpage}

\begin{sidewaysfigure}[htb]
  \centering
  \vspace{15cm}
  \includegraphics[width=20cm]{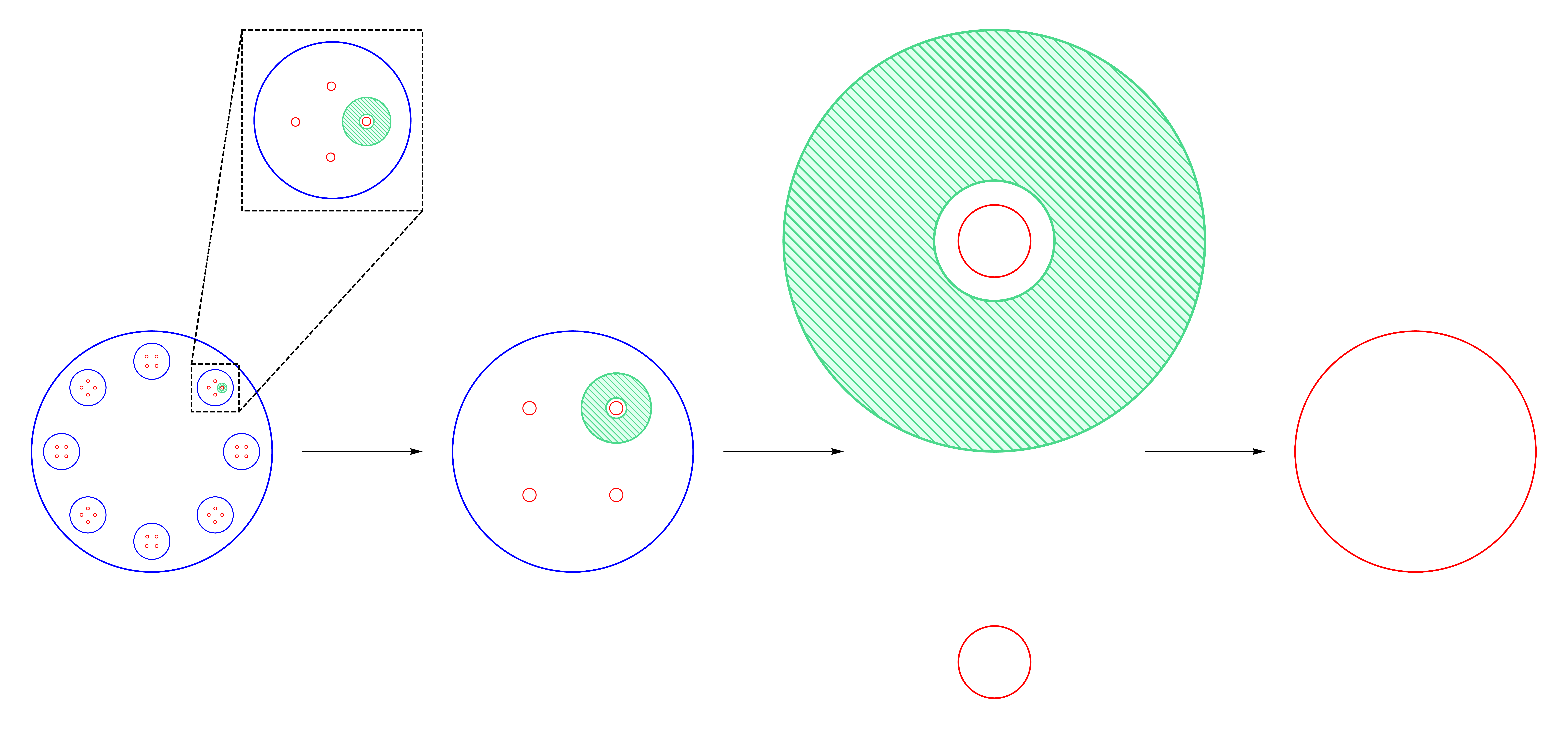}

  \vspace{.9cm}
  \caption{Schematic for the proof of Theorem 1.6 in the case where $\limsup |c_k| = +\infty$. Note how the round annulus ${\mathrm A}(\sqrt{-c_{k}}, 1, \sqrt{|c_{k}|})$ at stage $M_{k-1} + m_k$ (in this case $M_1+m_2$) is pulled back conformally first by the preimage branches of $Q_{M_{k-1},M_{k-1}+m_k}$ to form half the members of the collection $\mathcal C$ at Stage $M_1$.  Then the preimage branches of $Q_{M_{k-1}}$ pull back the annuli in $\mathcal C$ (one of which is visible in the zoomed box) to conformal annuli which separate the components of $\mathcal{S}_k$ at stage $0$.}  \label{MappingsPic}
    \unitlength1cm
\begin{picture}(0.01,0.01)
\put(-9,3.6){\footnotesize Stage $0$}
\put(-3.5,3.6){\footnotesize Stage $M_1$}
  \put(1.5,3.6){\footnotesize Stage $M_1+m_2$  }
    \put(7.1,3.6){\footnotesize Stage $M_2$}
    \put(-5.7,7.3){\footnotesize $Q_{M_1}$}
     \put(-.9,7.3){\footnotesize $Q_{M_1,M_1+m_2}$}
      \put(5.1,7.3){\footnotesize $P_{M_2}$}
       \put(-3.3,5.5){\footnotesize \color{blue}${\mathrm C}(0,2)$}
     \put(7.45,5.5){\footnotesize \color{red}${\mathrm C}(0,2)$}
     \put(1.3,13.3){\footnotesize \textcolor[rgb]{0.1,0.6,0.33}{${\mathrm A}(\sqrt{-c_{2}}, 1, \sqrt{|c_{2}|})$}}
     \put(-7,13.3){\footnotesize Zoom of Stage $0$}
 \end{picture}
\end{sidewaysfigure}

\newpage

\begin{remark}\label{RemarkStrongInequality}
The proofs presented for the previous results together with the results on Hausdorff dimension proved in Section~\ref{SectHD} only require the weaker inequality in~\eqref{Invariance1}.  Only in the next proof of the HNUP property do we employ the stronger inequality in~\eqref{Invariance*}.
\end{remark}

\begin{proof}[Proof of Theorem~\ref{ThmDichotomy}]
If the sequence $\{c_k\}_{k=1}^\infty$ is bounded, then the polynomial sequence $\Pm$ is bounded and it is well known that the iterated Julia sets for a bounded polynomial sequence are uniformly perfect.
Moreover, if $\{ P_{m}\}_{m=1}^{\infty}$ is a sequence of rational maps such that $\deg (P_{m})\geq 2$ for each
$m\in \Bbb{N}$ and such that $\{ P_{m}\mid m\in \Bbb{N}\} $ is relatively compact in the space Rat of all rational maps endowed with 
the topology of uniform convergence on the Riemann sphere, then the Julia set of the sequence
$\{ P_{m}\}_{m \in \mathbb{N}}$ is uniformly perfect. These results follow from Theorem 1.26 of~\cite{Sumi3}
(where one considers the skew product on the Riemann sphere on the closure of
$\{ \sigma ^{n}(P_{1},P_{2},\ldots )\mid n\in \mathbb{N}\cup \{ 0\} \}$
where $\sigma :\mbox{Rat}^{\Bbb{N}}\rightarrow \mbox{Rat}^{\Bbb{N}}$ denotes the
shift map on the infinite product space $\textrm{Rat}^{\mathbb{N}}$,
which is a compact metric space).

Now suppose $\limsup |c_k| = +\infty$. We first show that $\J$ is HNUP by showing it is pointwise thin as defined in Section~\ref{SectHNUP} via the formulation in terms of conformal annuli.

Fix $k \ge 1$.  As noted in Remark~\ref{Remark1}(\ref{Remark1b}) and illustrated in Figure~\ref{Preimages}, $P_{M_{k}}^{-1}(\overline {\mathrm D}(0, 2))$ consists of two components about $\pm \sqrt{-c_{k}}$ which are contained in the two discs $\overline {\mathrm D}(\sqrt{-c_{k}}, 1)$, $\overline {\mathrm D}(-\sqrt{-c_{k}}, 1)$. Hence the (round) annulus ${\mathrm A}(\sqrt{-c_{k}}, 1, \sqrt{|c_{k}|})$ separates $P_{M_{k}}^{-1}(\overline {\mathrm D}(0, 2))$.  Consider an open slit plane $S = \C \setminus R$, where $R$ is a ray emanating from the origin which does not meet either of the open disks ${\mathrm D}(\sqrt{-c_{k}}, \sqrt{|c_{k}|})$, ${\mathrm D}(-\sqrt{-c_{k}}, \sqrt{|c_{k}|})$.  (For example, in the case that $c_k>0$, $R$ could be either $(-\infty, 0]$ or $[0, +\infty)$ as illustrated in Figure~\ref{MappingsPic} where $k=2.$)

Since $S$ is simply connected and does not contain the origin, the map $Q_{M_{k-1},M_{k-1}+m_k}(z) = z^{2^{m_k}}$ has $2^{m_k}$ inverse branches defined on $S$, with each differing by a factor of a $2^{m_k}$-th root of unity.  Call one such inverse branch $f$, and note $A_k:=f({\mathrm A}(\sqrt{-c_{k}}, 1, \sqrt{|c_{k}|})) \subset f(\mathrm D(\sqrt{-c_{k}},\sqrt{|c_{k}|}))$ is a conformal annulus of modulus $\frac{\log |c_k|}2$ which, by~\eqref{Invariance*} (see also  Remark~\ref{Remark1}(\ref{Remark1a})), lies entirely in $\mathrm D(0,2)$ and separates one of the $2^{m_k+1}$ components of $Q_{M_{k-1},M_k}^{-1}(\overline {\mathrm D}(0,2))=Q_{M_{k-1},M_{k-1}+m_k}^{-1}(P_{M_{k}}^{-1}(\overline {\mathrm D}(0,2)))$ from each of the other components.  Clearly, by rotational symmetry about the origin, we can obtain a collection $\mathcal{C}$ of $2^{m_k+1}$ such conformal annuli, each separating a different one of the $2^{m_k+1}$ components of the set $Q_{M_{k-1},M_k}^{-1}(\overline {\mathrm D}(0,2))$ from each of the other components.

Now note that, by applying Remark~\ref{Remark1}(\ref{Remark1c}) repeatedly, $Q_{M_{k-1}}$ has all $2^{M_{k-1}}$ of its inverse branches defined and univalent on a neighbourhood of $\overline {\mathrm D}(0,2))$ (for another perspective we will see later in Section~\ref{SectHD}, these are just the maps of the form $\phi_\omega=\phi_{\omega_1}^{(1)} \circ \dots \circ \phi_{\omega_{k-1}}^{(k-1)}$ for all $\omega= \omega_1 \dots \omega_{k-1} \in I^{k-1}$).  Applying each such inverse branch to each annulus in $\mathcal{C}$ generates a collection of $2^{M_{k-1}}\cdot 2^{m_k+1}=2^{M_k}$ conformal annuli each having modulus $\frac{\log |c_k|}2$, separating one of the $2^{M_k}$ components of $Q_{M_{k}}^{-1}(\overline {\mathrm D}(0,2)))=\S_{k}$ from all other such components, and lying entirely in a component of $Q_{M_{k-1}}^{-1}(\overline {\mathrm D}(0,2))=\S_{k-1}$.  (Here, of course, we trivially set $\S_0 =\overline {\mathrm D}(0,2))$ to deal with the notation for the case $k=1$.)

Pick arbitrary $z \in \J$.  By the previous result, $z$ must lie in the bounded component of the complement of a conformal annulus of modulus $\frac{\log |c_k|}2$, which separates $\S_k$ (and therefore separates $\J$ since every component of $\S_k$ clearly contains a point of $\J$) and lies in a component of $\S_{k-1}$.  Lemma~\ref{Lem2.2}(\ref{Lem2.2b}), applied repeatedly, shows that each component of $\S_{k-1}$ has diameter no larger than $4\cdot \eta^{k-1}$, and so must shrink to zero as $k \to \infty$.  Since $\limsup \frac{\log |c_k|}2=+\infty$, we must have pointwise thinness of $\J$.

To extend this result to all the iterated Julia sets $\Jm$, we first observe that if we fix $k \ge 1$ and consider the truncated sequences $\{m_j\}_{j=k}^\infty$, $\{c_j\}_{j=k}^\infty$, then the corresponding polynomial sequence $\{P_m\}_{m=M_k +1}^\infty$ still trivially satisfies the same lower bound on the absolute values of the constants $c_k$ and
the same invariance condition \eqref{Invariance*}. This allows us to conclude that the Julia set at time $0$ for this truncated sequence, which is the same as $\J_{M_k}$ (the iterated Julia set at time $M_k$ for our \emph{original sequence}
$\Pm$), satisfies the pointwise thinness property where we again know that our separating annuli in our collection ${\mathcal C}_{M_k}$ of arbitrarily large modulus as above lie inside $\overline{\mathrm D}(0, 2)$. Now pick $m \ge 0$ arbitrary and not equal to any $M_j$.  Then choose $k$ as small as possible so that $m < M_k$.  The composition $Q_{m,M_k}(z) = z^{2^{M_k -m}}+c_k$ has a single critical value $c_k$ which avoids $\overline{\mathrm D}(0, 2)$. By Theorem~\ref{ThmCompInvar}, $Q_{m,M_k}^{-1}(\J_{M_k}) = \Jm$. The desired conclusion for $\Jm$ then follows on taking the preimages under $Q_{m,M_k}$ of the conformal annuli  in ${\mathcal C}_{M_k}$ which separate $\J_{M_k}$.
\end{proof}

\begin{remark}
The pointwise thinness of $\J_m$ can also be seen to follow from that of $\J$ by using the complete invariance of Theorem~\ref{ThmCompInvar} and noting that pointwise thinness property is preserved under analytic mappings.  We leave the details to the reader.
\end{remark}

\section{Results on Hausdorff Dimension}\label{SectHD}

In order to prove Theorems~\ref{ThmHDbound} and~\ref{ThmHDmax}, we utilize the notion of a non-autonomous conformal iterated function system as presented in \cite{RU} showing, in particular, that $\J$ is the limit set of such a system. The reason we can adopt this approach is that, in our case, the inverse branches of the key maps of our sequence are contractions on a suitable set containing the iterated Julia sets (which follows immediately from Theorem \ref{ThmJm} and part (b) of Lemma \ref{Lem2.2}). 

Here $X$ will always represent a compact subset of $\R^d$ such that $\overline{\textrm{int} X}=X$ with $X$ being such that $\partial X$ is smooth or $X$ is convex (our application below uses $X=\overline{D}(0,2)$ with $\R^d=\R^2=\C$).  Given a conformal map $\phi: X \to X$ we denote by $\phi'(x)$ or $D\phi(x)$ the derivative of $\phi$ evaluated at $x$, i.e., $\phi'(x):\R^d \to \R^d$ is a similarity linear map.  We also put $\|D\phi\|=\|\phi'\|=\sup\{|\phi'(x)|:x \in X\}$, where $|\phi'(x)|$ (or $|D\phi(x)|$) denotes the scaling factor (i.e., matrix norm) of $\phi'(x)$.

\begin{definition} \label{NCIFSdef}
A \emph{non-autonomous conformal iterated function system} (NCIFS) $\Phi$ on the set $X$ is given by a sequence $\Phi^{(1)},\Phi^{(2)},\Phi^{(3)}, \dots$, where each $\Phi^{(j)}$ is a collection of functions $(\phi_i^{(j)}:X \to X)_{i \in I^{(j)}}$ for which $I^{(j)}$ is a finite or countably infinite index set, such that the following hold.

\vspace{-.2cm}
\begin{enumerate}[(A)]
  \item \label{NCIFSa} \emph{Open set condition}:  We have $$\phi^{(j)}_a(\textrm{int}(X)) \cap \phi^{(j)}_b(\textrm{int}(X))= \emptyset$$ for all $j \in \N$ and all distinct indices $a, b \in I^{(j)}$.
  \item \label{NCIFSb} \emph{Conformality}:  There exists an open connected set $V \supset X$ (independent of $i$ and $j$) such that each $\phi_i^{(j)}$ extends to a $C^1$ conformal diffeomorphism of $V$ into $V$.
  \item \label{NCIFSc} \emph{Bounded distortion}: There exists a constant $K \geq 1$ such that for any $k \leq l$ and any $\omega_k, \omega_{k+1}, \dots, \omega_l$ with each $\omega_j \in I^{(j)}$, the map $\phi:=\phi_{\omega_k} \circ \phi_{\omega_{k+1}} \circ \dots \circ \phi_{\omega_l}$ satisfies $$|D\phi(x)| \leq K |D\phi(y)|$$ for all $x, y \in V$.
  \item \label{NCIFSd} \emph{Uniform contraction}:  There is a constant $\eta <1$ such that $$\|D\phi\| \leq \eta^m$$ for all sufficiently large $m$ and all $\phi=\phi_{\omega_j} \circ \dots \circ \phi_{\omega_{j+m-1}}$ where $j \geq 1$ and $\omega_k \in I^{(k)}$.  In particular, this holds if $$\|D\phi_i^{(j)}\| \leq \eta$$ for all $j \geq 1, i \in I^{(j)}$.
\end{enumerate}
\end{definition}

\vspace{.2cm}
\begin{definition}[Words] \label{Wordsdef}
For each $k \in \N$, we define the symbolic space $$I^k:=\prod_{j=1}^k I^{(j)}.$$
\end{definition}
Note that $k$-tuples $(\omega_1, \dots, \omega_k) \in I^k$ may be identified with the corresponding word $\omega_1\dots \omega_k$.

We now give the definition of the limit set of a NCIFS.

\begin{definition} \label{LimitSetdef}
For all $k \in \N$ and $\omega= \omega_1 \dots \omega_k \in I^k$, we define $\phi_\omega=\phi_{\omega_1}^{(1)} \circ \dots \circ \phi_{\omega_k}^{(k)}$ with $$X_\omega:= \phi_\omega (X) \textrm{   and  } X_k:= \bigcup_{\omega \in I^k} X_\omega.$$  The \emph{limit set} (or \emph{attractor}) of $\Phi$ is defined as $$J:=J(\Phi):=\bigcap_{k=1}^\infty X_k.$$
\end{definition}

Note that, in the case where each index set $I^{(j)}$ is finite (as is the case with our NCIFS below), the limit set $J(\Phi)$ is compact since it is an intersection of a decreasing sequence of compact sets.

To compute the Hausdorff dimension via Bowen's formula we will employ the following.

\begin{theorem}[Proposition 1.3 of \cite{RU}]\label{BowenFormula}
Suppose that $\Phi$ is a system such that both limits $$a:= \lim_{k \to \infty} \frac{1}{k} \log \# I^{(k)}$$ and $$b:= \lim_{k \to \infty, j \in I^{(k)}} \frac{1}{k} \log \left (1/\| D\phi_j^{(k)}\|\right )$$ exist and are finite and positive.  Then $HD(J(\Phi))=a/b$.
\end{theorem}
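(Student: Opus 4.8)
To prove Theorem~\ref{BowenFormula}, the plan is to establish the bounds $\HD(J(\Phi)) \le a/b$ and $\HD(J(\Phi)) \ge a/b$ separately, in both cases through the elementary partition function
\[ Z_n(s) := \sum_{\omega \in I^n} \|D\phi_\omega\|^s \qquad (s \ge 0,\ n \ge 1), \]
whose behaviour is forced by the hypotheses on $a$ and $b$. Since every $D\phi$ is a similarity, submultiplicativity of the operator norm gives $\|D\phi_\omega\| \le \prod_{j=1}^n \|D\phi_{\omega_j}^{(j)}\|$, hence $Z_n(s) \le \prod_{j=1}^n\big(\sum_{i \in I^{(j)}}\|D\phi_i^{(j)}\|^s\big)$; conversely, bounded distortion, applied with its single constant $K$ to compositions of every length, shows that for a prefix $\omega \in I^n$ of $\tau \in I^m$ the quantity $\|D\phi_\tau\|$ is comparable to $\|D\phi_\omega\|$ times the norm of the derivative of $\phi_{\tau_{n+1}}^{(n+1)} \circ \dots \circ \phi_{\tau_m}^{(m)}$, with a constant independent of $n$ and $m$. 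Writing $\#I^{(j)} = e^{(a+o(1))j}$ and, uniformly in $i \in I^{(j)}$, $\|D\phi_i^{(j)}\| = e^{-(b+o(1))j}$, each factor $\sum_{i \in I^{(j)}}\|D\phi_i^{(j)}\|^s$ equals $e^{(a-bs+o(1))j}$, so the logarithm of the product over $j \le n$ equals $(a-bs)\tfrac{n^2}{2}(1+o(1))$. It follows that $Z_n(s) \to 0$ when $s > a/b$ and $Z_n(s) \to +\infty$ when $s < a/b$, the stray factor $e^{-(s\log K)n}$ from bounded distortion being harmless against the $e^{\Theta(n^2)}$ behaviour.

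For the upper bound, fix $s > a/b$. For each $n$ the family $\{X_\omega : \omega \in I^n\}$ covers $J(\Phi)$, and $\diam X_\omega \le (\diam X)\|D\phi_\omega\|$, which by uniform contraction tends to $0$ uniformly in $\omega$ as $n \to \infty$. Hence $\mathcal H^s_{\delta_n}(J(\Phi)) \le (\diam X)^s\, Z_n(s)$ with mesh $\delta_n := \max_{\omega \in I^n}\diam X_\omega \to 0$, and letting $n \to \infty$ gives $\mathcal H^s(J(\Phi)) = 0$, so $\HD(J(\Phi)) \le s$; now let $s \downarrow a/b$. This direction uses only covering together with the asymptotics of $Z_n$.

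For the lower bound, fix $s < a/b$ and let $\mu$ be a weak-$*$ subsequential limit of the probability measures $\mu_n := \sum_{\omega \in I^n}\frac{\|D\phi_\omega\|^s}{Z_n(s)}\,(\phi_\omega)_*\nu$ on $X$, where $\nu$ is Lebesgue measure on $X$ normalised to total mass $1$ (these are probability measures, the summands being essentially disjoint by the open set condition). Then $\mu$ is supported on $J(\Phi) = \bigcap_n X_n$, and the comparison from the first paragraph shows that $Z_m(s)$ is comparable to $Z_n(s)$ times the analogous partition sum over the steps $n+1,\dots,m$, and hence that $\mu_m(X_\omega)$ is comparable to $\|D\phi_\omega\|^s / Z_n(s)$ for all $m \ge n$ and all $\omega \in I^n$, so that $\mu(X_\omega)$ is comparable to $\|D\phi_\omega\|^s / Z_{|\omega|}(s)$; moreover conformality and bounded distortion make $\diam X_\omega$ and $\mathrm{vol}(X_\omega)$ comparable to $\|D\phi_\omega\|$ and to $\|D\phi_\omega\|^d$. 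To bound $\mu(\overline{B}(x,r))$ for $x \in J(\Phi)$ and small $r$, one covers $J(\Phi)\cap\overline{B}(x,r)$ by the ``stopping cells'' $X_\omega$ meeting the ball, i.e.\ those with $\diam X_\omega \le r < \diam X_{\omega^-}$: the super-exponential decay of cell diameters forces these to sit at an essentially unique combinatorial level $n(r)$, a volume/packing argument via the open set condition bounds their number by a multiple of $(r/\diam X_\omega)^d$, and a short computation with the asymptotics of $Z_{n(r)}(s)$ and the relation between $r$ and $n(r)$ then yields, for every $t < s$, a Frostman bound $\mu(\overline{B}(x,r)) \le C_t\, r^t$ valid for all small $r$. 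By the mass distribution principle $\HD(J(\Phi)) \ge t$; letting $t \uparrow s$ and then $s \uparrow a/b$ completes the argument.

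The step I expect to be the main obstacle is exactly this Frostman estimate in the non-autonomous setting. Because the per-step contraction at step $j$ is itself exponentially small, the level-$n$ cell diameters decay like $e^{-\Theta(n^2)}$, so a single level-$n(r)$ cell can be far smaller than $r$ and a ball of radius $r$ may meet very many of them; controlling their number and total $\mu$-mass forces one to keep careful track of the interplay between the Euclidean scale $r$, the combinatorial level $n(r)$, and the competing exponents $a$ and $b$, and it is precisely there that $t \le a/b$ turns out to be the sharp threshold making the estimate close. Two features are what make this workable: bounded distortion with a constant uniform over compositions of all lengths (which is what yields $\mu(X_\omega)$ comparable to $\|D\phi_\omega\|^s/Z_{|\omega|}(s)$ with a depth-independent constant), and the open set condition (which renders cell overlaps negligible and supplies the packing bound). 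The hypotheses on $a$ and $b$ enter only to guarantee the finiteness and the uniform-in-$i$, uniform-in-$j$ control of $\sum_{i\in I^{(j)}}\|D\phi_i^{(j)}\|^s$ that drives the whole computation; in the applications below, where each $I^{(j)}$ is finite, finiteness of these sums is automatic.
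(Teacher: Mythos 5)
You should know at the outset that the paper contains no proof of this statement: it is imported verbatim as Proposition 1.3 of \cite{RU}, where it arises as a special case of Rempe-Gillen and Urb\'anski's general Bowen's formula for non-autonomous conformal IFS, proved via their subadditive pressure machinery. So the only fair comparison is with that route, and your proposal is genuinely different: you reprove the special case directly, with the classical two-sided Moran argument --- covering by the level-$n$ cells plus the partition-function asymptotics $\log Z_n(s)=(a-bs)\tfrac{n^2}{2}(1+o(1))$ for the upper bound, and a weak-$*$ limit measure with quasi-multiplicativity of $Z_n$ (one factor of $K$ per splitting, as you note) plus a Frostman estimate and the mass distribution principle for the lower bound. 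The outline is correct, and in fact the step you single out as the main obstacle is more benign than you fear: since the per-step contraction at level $n$ is $e^{-(b+o(1))n}$, the ratio of consecutive level diameters is only $e^{\Theta(n)}$, while cylinder masses are $e^{-(a+o(1))n^2/2}$; hence for $x\in J(\Phi)$ and $d_n\le r<d_{n-1}$ the crude bound ``$\overline{B}(x,r)$ meets boundedly many level-$(n-1)$ cells'' (inscribed balls of radius comparable to $d_{n-1}$, disjoint by the open set condition) already gives $\mu(\overline{B}(x,r))\le C e^{-(a+o(1))(n-1)^2/2}\le r^{t}$ for any fixed $t<a/b$ and large $n$, without needing the finer $(r/d_n)^d$ packing count; all $e^{o(n^2)}$ errors are absorbed by the strict inequality $tb<a$. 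Remaining small points you should make explicit: for the limit measure only the upper bound $\mu(X_\omega)\le C\|D\phi_\omega\|^s/Z_{|\omega|}(s)$ is needed, and it follows from the portmanteau theorem applied to the closed sets $X_\omega$ (likewise $\mu(X_k)=1$, so $\mu$ charges $J(\Phi)$); the inequality $\diam X_\omega\le(\diam X)\|D\phi_\omega\|$ uses convexity of $X$ (or costs a bounded factor when $\partial X$ is merely smooth); and your argument as written assumes each $I^{(j)}$ is finite, which suffices for this paper's application ($\#I^{(k)}=2^{m_k+1}$, $X=\overline{\mathrm D}(0,2)$) but is narrower than the setting of \cite{RU}. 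What your approach buys is a short, self-contained proof of exactly the statement used here; what the citation to \cite{RU} buys the authors is the much more general theorem (unbalanced systems, infinite alphabets, pressure formalism) of which this is a corollary.
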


Note that the limit for $b$, when it exists, must exist independently of the choices of $j=j(k)$ taken from each $I^{(k)}$.  In our application, we will see that the quantities $\|D\phi_j^{(k)}\|$ will always be independent of $j$.

Our next step is to verify that we can obtain a NCIFS $\Phi$ whose limit set $J(\Phi)$ will be identical with $\J= \J_0$.  First we set $X : = \overline{\mathrm{D}}(0,2)$.  As noted in Remark~\ref{Remark1}(\ref{Remark1c}), each map $Q_{M_{k-1},M_k}(z)=z^{2^{m_k+1}}+c_k$ has the full set of $2^{m_k+1}$ branches of the inverse each defined on $\mathrm{D}(0,4) \supset X$.  For each fixed $k$, we denote this set of inverse functions by $\{\phi_j^{(k)}\}_{j=1}^{2^{m_k+1}}$, which we choose as our $\Phi^{(k)}$, noting then that $\#I^{(k)}=2^{m_k+1}$ in Definition~\ref{NCIFSdef}.  It then follows from the invariance condition Remark~\ref{Remark1}(\ref{Remark1c}) that each of the maps $\phi_j^{(k)}$, $ j=1, \ldots, 2^{m_k+1}$ maps the set $X$ into itself.

By Lemma~\ref{Lem2.2}(\ref{Lem2.2b}), we see that
\begin{equation}\label{phiDeriv}
\|(\phi_j^{(k)})'\|=\sup\{|{(\phi_j^{(k)}})'(x)|:x\in X\}=\frac{1}{2^{m_k+1}} (|c_k|-2)^{\left(\frac{1}{2^{m_k+1}}-1\right)}.
\end{equation}

Note that $\|(\phi_j^{(k)})'\|$ is in particular independent of $j$ and thus of the particular inverse branch used. Using the terminology given in Definition 4.1 on page 1993 of \cite{RU}, we can thus say our system $\Phi$ is \emph{balanced}.

We now quickly verify that conditions (A)-(D) of Definition~\ref{NCIFSdef} are met, thus giving that the associated $\Phi$ is indeed a NCIFS.

The open set condition (\ref{NCIFSa}) follows immediately from Remark~\ref{Remark1}(\ref{Remark1c}) (see Figure~\ref{Preimages} for an illlustration).  Note that the sets $X_k$ from Definition~\ref{LimitSetdef} are identical with the sets ${\mathcal S}_k$ in~\eqref{Sk}, and thus, by Remark~\ref{Invariance2}, are a union of $\prod_{i=1}^k 2^{m_i+1}=2^{M_k}$ mutually disjoint sets.
As noted in~\cite{RU}, for dimension $d=2$ the bounded distortion condition (\ref{NCIFSc}) follows from (\ref{NCIFSb}), shown below, and the standard distortion theorems for univalent functions, e.g., Theorem 1.6 of~\cite{CG}.
Since the maps $\phi_j^{(k)}$ send $X$ into itself, the uniform contraction condition (\ref{NCIFSd}) holds by Lemma~\ref{Lem2.2}(\ref{Lem2.2b}) with $\eta = 2^{-\frac{11}{4}}$.

It remains to show the conformality condition (\ref{NCIFSb}), which we establish using Lemma~\ref{Lem2.2}(\ref{Lem2.2b}) with $V=\mathrm{D}(0,2+\epsilon)$ for any small fixed $\epsilon >0$ such that $\eta_\epsilon<1$.  Fixing $k \ge 1$ and $j \in I^{(k)}$, gives that $\sup\{|{(\phi_j^{(k)}})'(x)|:x\in V\} \leq \eta_\epsilon$, which, combined with the convexity of $V$ and the fact that  $\phi_j^{(k)}(X) \subseteq X$, yields that each point of $\phi_j^{(k)}(V)$ must lie within a distance of $\eta_\epsilon \cdot \epsilon$ of $\phi_j^{(k)}(X) \subseteq X$, and so $\phi_j^{(k)}(V)\subseteq \overline{\mathrm{D}}(0,2+\eta_\epsilon \cdot \epsilon) \subseteq V$.

Before embarking on proving Theorems~\ref{ThmHDbound} and~\ref{ThmHDmax}, we remark that the limit set of the NCIFS $\Phi$ constructed above does indeed coincide with the Julia set $\J$, this being an immediate consequence of Theorem~\ref{ThmJm} and the fact that each $\S_k=X_k$.

\begin{proof}[Proof of Theorem~\ref{ThmHDbound}]
We prove the result for the case $m = 0$. Using part (a) of Proposition 3.3 of~\cite{Fal}, the result for the other iterated Julia sets follows from complete invariance (Theorem~\ref{ThmCompInvar}) and the fact that the polynomials $P_m$ are complex analytic and therefore $1$-H\"older.

For any $n \in \N$ and any $j \in I^{(n)}$, by~\eqref{phiDeriv} we see that, since $|c_n|>4$, we must have
$\|(\phi_j^{(n)})'\| \leq \frac{1}{2^{m_n+1}}$.
For all $k \in \N$ and $\omega= \omega_1 \dots \omega_k \in I^k$, we then see that $\phi_\omega=\phi_{\omega_1}^{(1)} \circ \dots \circ \phi_{\omega_k}^{(k)}$ satisfies $\|\phi_\omega'
\| \leq \frac{1}{2^{m_1+1}}\cdots \frac{1}{2^{m_k+1}}=\frac{1}{2^{M_k}}$.  Hence, by the convexity of $X$, $X_k$ is covered by $2^{M_k}$ sets $X_\omega= \phi_\omega (X)$ with diameters $\mathrm{diam}(X_\omega) \leq \frac{1}{2^{M_k}} \cdot \mathrm{diam}(X)=\frac{4}{2^{M_k}}$.

Fix $\delta >0$.  We then choose $k$ such that $\frac{4}{2^{M_k}}<\delta$, and note that, since $\J \subset X_k$, we have $\mathcal{H}_\delta^1 (\J) \leq 2^{M_k} \cdot \frac{4}{2^{M_k}}=4$.  Letting $\delta \to 0$, we see that the Hausdorff 1-dimensional measure satisfies $\mathcal{H}^1 (\J) \leq 4$, thus implying $\mathrm{HD}(\J) \leq 1$.
\end{proof}

\begin{proof}[Proof of Theorem~\ref{ThmHDmax}]

We first restrict ourself to the case where $m=0$ and show we can construct our sequence $\Pm$ so that $\HD(\J) = 1$.

Define two sequences of real numbers $\{a_k\}_{k=1}^\infty$, $\{b_k\}_{k=1}^\infty$ by

\begin{equation}\label{ak}
a_k:= \frac{1}{k} \log \# I^{(k)} = \frac{1}{k} \log 2^{m_k+1}= \frac{1}{k} (m_k+1)\log 2,
\end{equation}

and, using \eqref{phiDeriv},

 \vspace{-.5cm}
  \begin{eqnarray}
b_k &:=& \frac{1}{k}\log \frac{1}{\|(\phi_j^{(k)})'\|}= \frac{1}{k} \log \left(2^{m_k+1} (|c_k|-2)^{\left(1-\frac{1}{2^{m_k+1}}\right)}\right)  \\
  &=&\frac{1}{k} \left[ (m_k+1)\log 2 + \left(1-\frac{1}{2^{m_k+1}}\right)\log (|c_k|-2)\right]\\
  &=& a_k + \frac{1}{k} \left[\left(1-\frac{1}{2^{m_k+1}}\right)\log (|c_k|-2)\right].\label{bk}
\end{eqnarray}

Now we show that we can choose $\{m_k\}_{k=1}^\infty$ and $\{c_k\}_{k=1}^\infty$ satisfying~\eqref{Invariance*} with $|c_k| \to \infty$ (and each $|c_k| >4$) so $\mathcal{J}$ will be HNUP by Theorem~\ref{ThmDichotomy}.

By \eqref{ak} and \eqref{bk}, we see that by ensuring

\vspace{-.2cm}
\begin{enumerate}[(i)]
\item \label{Cond1} $\lim_{k \to \infty} \frac {m_k}k$ exists as a finite and positive number, and

\item \label{Cond2} $\lim_{k \to \infty} \frac{\log |c_k|}{k}=0$,

\end{enumerate}
it follows that $\{a_k\}$ and $\{b_k\}$ are convergent with the same finite and positive limit.  Thus, we may apply Theorem~\ref{BowenFormula} to conclude that $HD(J(\Phi))=\mathrm{HD}(\J)=1$.

 To see this can indeed happen, for each $k \ge 1$, we set $c_k = k + 4$ and $m_k = k+1$. One can then check readily that the invariance condition \eqref{Invariance*} is satisfied for all $k$. It is also easy to verify that both of the conditions~(\ref{Cond1}) and~(\ref{Cond2}) above are met, whence the result follows.

We now complete the proof by considering an arbitrary $m_0>0$.  Choose some $M_k>m_0$.  By the complete invariance shown in Theorem~\ref{ThmCompInvar}, we have $Q_{m_0, M_k}(\J_{m_0})=\J_{M_k}$.  As was done in last part of the proof of Theorem~\ref{ThmDichotomy}, we apply the above argument to the truncated sequence $\{P_m\}_{m=M_k+1}^\infty$ to show $\HD(\J_{M_k}) = 1$.  Again applying part (a) of Proposition 3.3 in~\cite{Fal} for the $1$-H\"older map $Q_{m_0, M_k}$, we then must have $1=\HD(\J_{M_k}) \leq \HD(\J_{m_0}) \leq 1$, where the last inequality follows from  Theorem~\ref{ThmHDbound}.
\end{proof}

\section*{Acknowledgments} This work was partially supported by a grant from the Simons Foundation (\#318239 to Rich Stankewitz).

The third author (Hiroki Sumi) was partially supported by JSPS Grant-in-Aid for Scientific Research (B) Grant number JP 19H01790.

%

\end{document}